\newtheorem{thm}{Theorem}[section]
\newtheorem{prop}[thm]{Proposition}
\newtheorem{lem}[thm]{Lemma}
\newtheorem{cor}[thm]{Corollary}
\theoremstyle{definition}
\newtheorem{dfn}[thm]{Definition}
\newtheorem{rem}[thm]{Remark}
\numberwithin{equation}{section}
\begin{document}
\title{Polars and antipodal sets of generalized $s$-manifolds}

\author{Shinji Ohno}
\address{Department of Mathematics College of Humanities and Sciences,
Nihon University,
3-25-40 Sakurajosui, Setagaya-ku, Tokyo 156-8550, Japan.}
\email{ohno.shinji@nihon-u.ac.jp}

\author{Takashi Sakai}
\address{Department of Mathematical Sciences,
Tokyo Metropolitan University,
1-1 Minami-Osawa, Hachioji, Tokyo 192-0397, Japan}
\email{sakai-t@tmu.ac.jp}

\date{\today}

\subjclass[2020]{53C35}

\keywords{symmetric space, $s$-manifold, $\Gamma$-symmetric space, polar, antipodal set, 
quandle
}

\thanks{The second author was supported by JSPS KAKENHI Grant Number JP21K03250.
}

\maketitle
\begin{abstract}
{
In this paper, we introduce the notion of generalized $s$-manifolds,
which is a generalization of symmetric spaces.
Then we give a method to construct generalized $s$-manifolds and present some typical examples.
We study polars and antipodal sets of generalized $s$-manifolds
aiming to extend the results on compact symmetric spaces due to Chen and Nagano.
}

\end{abstract}


\section{Introduction}
\label{sec:introduction}

The geometry of Riemannian symmetric spaces has been studied from the 1920s
initiated by E. Cartan.
Riemannian symmetric spaces are distinguished Riemannian manifolds and are intensively studied
not only in differential geometry but also other related fields, such as topology, harmonic analysis,
and representation theory.
Moreover, there are many works on generalizations of symmetric spaces.
Loos \cite{Loos} and Nagano \cite{Nagano} focused on the algebraic structure
of geodesic symmetries of Riemannian symmetric spaces, and studied symmetric spaces
without Riemannian metrics (Definition~\ref{dfn:symmetric spaces}). 
This concept was generalized to the study of $k$-symmetric spaces and (regular) $s$-manifolds
(cf. \cite{Ledger}, \cite{LO}, \cite{Kowalski}).
Furthermore, in 1981, Lutz \cite{Lutz} introduced the notion of $\Gamma$-symmetric spaces,
which has a symmetry isomorphic to a finite abelian group $\Gamma$ at each point (Definition~\ref{dfn:Gamma-symmetric space}).
In the case where $\Gamma$ is isomorphic to $\mathbb{Z}_2 \times \mathbb{Z}_2$, there are several works
in the literature (e.g. \cite{Bahturin-Goze}, \cite{Goze-Remm}, \cite{Kollross}).
In these works, $\Gamma$ is supposed to be finite abelian,
because those are related to graded Lie algebras.
In this paper, we introduce the notion of generalized $s$-manifolds
whose symmetry group $\Gamma$ is not necessarily finite and abelian,
moreover $\Gamma$ can be a Lie group.
Although we have already introduced generalized $s$-manifolds in \cite{OST},
we give the precise definition in this paper (Definition~\ref{dfn:generalized s-manifold}).
Then we present several typical examples of generalized $s$-manifolds.
It is well-known that a Riemannian symmetric space can be obtained from a Riemannian symmetric pair.
We give a method to construct homogeneous generalized $s$-manifolds
from $\Gamma$-symmetric triples (Section~\ref{sec:Gamma-symmetric_triple}).
The symmetry of a symmetric space has the algebraic structure as a quandle.
In some studies, quandles are considered as discrete symmetric spaces, and the theory of symmetric spaces is used as a basis for the study of quandles (cf. \cite{KNOT}, \cite{Tamaru}, etc.).
We show that a generalized $s$-manifold has a structure of a $G$-family of quandles
(Proposition~\ref{prop:G-family of quandles}).

Chen and Nagano studied polars, meridians, antipodal sets and $2$-numbers,  
which are determined from the structure of compact symmetric spaces in a series of papers since 1978 (\cite{CN}, \cite{Chen-Nagano1988}, etc.). 
The results of these studies are called the Chen--Nagano theory. 
It is notable that a connected compact irreducible symmetric space is determined by a pair of a polar and a meridian.
The Euler number of a compact symmetric space is evaluated from above by its $2$-number. 
Our study is motivated to extend the results on compact symmetric spaces due to Chen and Nagano.
In Section~\ref{sec:polars_and_antipodal_sets_of_generalized_s-manifolds}, we define $\gamma$-polars, polars, and antipodal sets of generalized $s$-manifolds.
Then we obtain the invariant named \textit{antipodal number}
as the supremum of the cardinalities of antipodal sets of the generalized $s$-manifold.
We show inequalities of the antipodal number from the above 
by the sum of the antipodal numbers of its polars and $\gamma$-polars (Theorem~\ref{thm:inequality of antipodal number}).
The maximal antipodal sets of $\Gamma$-symmetric spaces, where $\Gamma$ is finite abelian,
were studied in \cite{Terauchi}, \cite{Okubo}, \cite{Quast-Sakai} and \cite{Amann}.

\section{Definition of generalized $s$-manifolds}
\label{sec:definition_of_generalized_s-manifolds}

In this section, first  we recall the definition of Riemannian symmetric spaces
and some generalized notions.
As a generalization of those notions,
we introduce generalized $s$-manifolds.

A connected Riemannian manifold $(M,g)$ is called a \textit{Riemannian symmetric space} 
if for each point $x\in M$ there exists an isometry $s_x$ of $M$ which 
fixes $x$ and reverses all geodesics passing through $x$. 
Here, $s_x$ is called the \textit{geodesic symmetry} at the point $x\in M$.
Loos \cite{Loos} and Nagano \cite{Nagano} 
studied the following symmetric spaces without  Riemannian metrics.
Hereafter, we set
$$
F(\phi, M)=\{ y \in M \mid \phi(y) = y\}, \quad 
F(\Phi, M)=\{ y \in M \mid \phi(y) = y \ (\forall \phi \in \Phi)\}
$$
for a diffeomorphism $\phi$ of a manifold $M$
and a subgroup $\Phi$ of the diffeomorphism group $\mathrm{Diff}(M)$ of $M$. 

\begin{dfn} \label{dfn:symmetric spaces}
Let $M$ be a $C^\infty$-manifold and $\mu : M \times M \to M$ a $C^\infty$-map.
For each $x \in M$, we define the map $s_x : M \to M$ by $s_x(y) := \mu(x,y) \ (y \in M)$.
Then $M$ is called a \textit{symmetric space},
if it satisfies the following conditions.
\begin{enumerate}
\item[(S1)] $s_{x}(x)=x\quad (\forall x\in M)$. 
\item[(S2)] $s_{x}\circ s_{x}=\mathrm{id}_{M} \quad (\forall x \in M)$. 
\item[(S3)] $s_{x}\circ s_{y}=s_{s_{x}(y)}\circ s_{x} \quad (\forall x,y \in M)$.
\item[(S4)] For each $x \in M$, $x$ is an isolated fixed point of $s_{x}$,  
that is, $x$ is an isolated point in $F(s_x, M)$.\end{enumerate}
\end{dfn}
For a Riemannian symmetric space $(M,g)$, 
the geodesic symmetry $s_x$ at each point $x \in M$ is a diffeomorphism since it is an isometry and clearly satisfies conditions (S1)--(S4). 
Therefore $(M, g)$ is a symmetric space in the sense of Definition~\ref{dfn:symmetric spaces}.
Nagano \cite{Nagano} studied the category of symmetric spaces and 
defined the notion of morphism between two symmetric spaces $M$ and $N$.
A $C^\infty$-map $f : M \to N$ is called a \textit{morphism} if $f$ and
the set $\{s_x\}$ of point symmetries of $M$ are compatible, 
i.e.,  $f \circ s_x = s_{f(x)} \circ f$ holds for any $x \in M$.
The condition (S3) means that, for each $x \in M$,
the point symmetry $s_x$ is an automorphism of $M$.
The condition (S3) is also the condition (Q3) of the definition of quandles described below. 

\begin{dfn}[\cite{Joyce}] \label{dfn:Quandle}
Let $X$ be a set equipped with a binary operation $* : X \times X \to X$.
Then $(X, *)$ is called a \textit{quandle},
if it satisfies the following conditions.
\begin{enumerate}
\item[(Q1)] $x*x=x \quad (\forall x \in X)$.
\item[(Q2)] For each $x,y \in X$, there exists a unique $z \in X$ such that $z*y=x$.
\item[(Q3)] $(x*y)*z = (x*z)*(y*z) \quad (\forall x,y,z \in X)$.
\end{enumerate}
\end{dfn}

For a symmetric space $M$ in the sense of Definition~\ref{dfn:symmetric spaces}, 
we define a binary operation $* : M \times M \to M$ on $M$ by $x*y := s_y(x)$.
Then $(M, *)$ is a quandle.

Ledger \cite{Ledger} studied Riemannian $s$-manifolds generalizing Riemannian symmetric spaces. 
In \cite{Kowalski77}, a regular $s$-manifold is defined without a Riemannian metric as follows. 

\begin{dfn}[\cite{Kowalski77} Definition~1] \label{dfn:regular s-manifold}
Let $M$ be a $C^\infty$-manifold and $\mu : M \times M \to M$ a $C^\infty$-map.
For each $x \in M$, we define the map $s_x : M \to M$ by $s_x(y) := \mu(x,y) \ (y \in M)$.
Then $M$ is called a \textit{regular $s$-manifold},
if it satisfies the following conditions.
\begin{enumerate}
\item $s_x(x) = x \quad (\forall x \in M)$.
\item For each $x \in M$, $s_x : M \to M$ is a diffeomorphism. 
\item $s_x \circ s_y = s_{s_x(y)} \circ s_x \quad (\forall x,y \in M)$.
\item For each $x \in M$, $x$ is an isolated fixed point of $s_{x}$,  
that is, $x$ is an isolated point in $F(s_x, M)$.
\end{enumerate}
\end{dfn}

In Kowalski's lecture note \cite{Kowalski}, 
a regular $s$-manifold is defined by the  following condition (4')
instead of the condition (4) in Definition~\ref{dfn:regular s-manifold}.
\begin{enumerate}
\item[(4')] For each $x\in M$, the differential map $(ds_{x})_{x}: T_{x} M \to T_{x} M$ has no fixed vectors except the zero vector.
\end{enumerate}
The conditions (4) and (4') are equivalent if there exists a connection which is invariant under the point symmetries $\{s_{x}\}_{x\in M}$.
However, the existence of such a connection is non-trivial.
An $s$-manifold satisfying the condition (4') is called a \textit{tangentially regular} $s$-manifold in \cite{Kowalski77}.
A tangentially regular $s$-manifold has a connection which is invariant under the point symmetries.
Thus, a tangentially regular $s$-manifold is a regular $s$-manifold.
A regular $s$-manifold gives a structure of quandle
by defining the binary operation $*$
similarly to the case of a symmetric space.

Whereas a regular $s$-manifold has the symmetry of a single transformation $s_{x}$ at each point $x$,
a $\Gamma$-symmetric space, which was introduced by Lutz (\cite{Lutz}),
has the symmetry of a group $\Gamma$ at each point.

\begin{dfn}[\cite{Lutz}] \label{dfn:Gamma-symmetric space}
Let $M$ be a connected $C^\infty$-manifold
and $\Gamma$ a finite abelian group.
A \textit{$\Gamma$-symmetric structure} on $M$
is a family $\mu = \{\mu^\gamma \}_{\gamma \in \Gamma}$
of smooth maps $\mu^\gamma : M \times M \to M$ which satisfies the following conditions.
\begin{enumerate}
\item For each $x \in M$,
the map 
$$\varphi_x:\Gamma \to \mathrm{Diff}(M);\ \gamma \mapsto \gamma_x := \mu^\gamma(x,\cdot)$$
is an injective homomorphism.
\item Each $x \in M$ is isolated in the fixed point set
$F(\varphi_{x}(\Gamma), M)$.
\item For each $x \in M$ and $\gamma \in \Gamma$,
$\gamma_x$ is an automorphism of $\mu$, i.e.
$$
\mu^\delta \big( \gamma_x(y), \gamma_x(z) \big)
= \gamma_x \big(\mu^\delta(y,z)\big) \quad
\text{for all} \ y,z \in M\ \text{and for all}\ \delta \in \Gamma.
$$
\end{enumerate}
\end{dfn}

In the preceding research on $\Gamma$-symmetric spaces, many studies have focused on the relationship with graded Lie algebras (\cite{Bahturin-Goze}).
On the other hand, our aim is to investigate
polars, antipodal sets and antipodal numbers defined from symmetric transformations. 
Therefore, it is not necessary to assume finiteness or commutativity of $\Gamma$.
In addition, there are many examples which have symmetries of various groups, as we will see later. 
Therefore, we define the following notion of generalized $s$-manifolds.  
\begin{dfn} \label{dfn:generalized s-manifold}
Let $M$ be a $C^{\infty}$-manifold and $\Gamma$ a group.
For each point $x \in M$,
let $\varphi_{x}:\Gamma \to \mathrm{Diff}(M)$ be a group homomorphism which satisfies the following conditions.

\begin{enumerate}
\item[(1)] For each $\gamma \in \Gamma$, 
the map $\mu^\gamma: M \times M \to M;\ (x,y) \mapsto \varphi_x(\gamma)(y)$ is $C^\infty$. 
In particular, if $\Gamma$ is a Lie group, 
then the map $\mu : \Gamma \times M \times M \to M;\ (\gamma,x,y) \mapsto \varphi_x(\gamma)(y)$ is $C^\infty$.

\item[(2)] Each $x \in M$ is an isolated fixed point of the action of $\varphi_{x}(\Gamma)$ on $M$,
that is, $x$ is an isolated point of 
$F(\varphi_x(\Gamma), M)$.

\item[(3)] For any $x, y \in M$ and $\gamma, \delta \in \Gamma$,
 $$
 \varphi_{x}(\gamma) \circ \varphi_{y}(\delta) \circ \varphi_{x}(\gamma)^{-1} 
=\varphi_{\varphi_{x}(\gamma)(y)}(\gamma \delta \gamma^{-1}).
$$
\end{enumerate}
Then
$(\Gamma, \{\varphi_x \}_{x \in M})$ is called a \textit{generalized $s$-structure} on $M$, 
and we call $(M, \Gamma, \{\varphi_x \}_{x \in M})$ a \textit{generalized $s$-manifold}.
\end{dfn}

We call $\varphi_x(\gamma) \in \mathrm{Diff}(M)$ the \textit{symmetric transformation} at $x \in M$
for $\gamma \in \Gamma$.
The symmetric transformation $\varphi_x(\gamma)$ is 
also denoted by
$\gamma_{x}$. 
For each point $x \in M$, the subgroup 
$\Gamma_x := \varphi_x(\Gamma)=\{\varphi_x(\gamma) \mid \gamma \in \Gamma\}
= \{ \gamma_{x} \mid \gamma \in \Gamma \}$
of $\mathrm{Diff}(M)$ is
called the \textit{symmetric transformation group} at $x \in M$.
With this notation, the condition (3) is expressed as
\begin{align*}
\gamma_{x} \circ \delta_{y} \circ  \gamma_{x}^{-1} 
=(\gamma \delta \gamma^{-1})_{\gamma_x (y)}
\end{align*}
for any $x, y\in M$ and $\gamma, \delta \in \Gamma$.
In particular, if $\Gamma$ is an abelian group, 
then we have
\begin{align*}
\gamma_{x} \circ \delta_{y} \circ \gamma_{x}^{-1} 
= \delta_{\gamma_x (y)}
\end{align*}
and the condition is equivalent to the condition (3) of Definition~\ref{dfn:Gamma-symmetric space}.  
Furthermore, when $\Gamma$ is generated by a single element, i.e. $\Gamma =\langle s \rangle$, 
this condition is equivalent to
\begin{align*}
s_{x} \circ s_{y} \circ s_{x}^{-1} 
=s_{s_x (y)}.
\end{align*}
In particular, a symmetric space defined by Definition~\ref{dfn:symmetric spaces} has a generalized $s$-structure where $\Gamma=\mathbb{Z}_{2}$.
Thus, the generalized $s$-manifold is a generalization of the symmetric space. 
More generally, it can be observed that $k$-symmetric spaces have the generalized $s$-structure where $\Gamma=\mathbb{Z}_{k}$ (cf. \cite{Kowalski}). 

\begin{rem}
In the case of a $\Gamma$-symmetric space $M$,
the symmetric transformation group $\Gamma_x$ is isomorphic to $\Gamma$
for every point $x \in M$ by the condition (1) in Definition~\ref{dfn:Gamma-symmetric space}.
On the other hand, in Definition~\ref{dfn:generalized s-manifold}
of a generalized $s$-manifold $(M, \Gamma, \{\varphi_x \}_{x \in M})$,
injectivity of $\varphi_x$ is not assumed.
Hence $\Gamma_{x}$ may not be isomorphic to $\Gamma$  for a point $x \in M$.
Moreover $\Gamma_{x}$ and $\Gamma_{y}$ may not be isomorphic for $x$ and $y$  in $M$.
In order to consider a \textit{subspace},
defined in Definition~\ref{dfn:subspace},
as a generalized $s$-manifold, 
injectivity of $\varphi_x$ cannot be assumed. 
\end{rem}

\begin{dfn}
A generalized $s$-structure $(\Gamma, \{\varphi_x \}_{x \in M})$ on a manifold $M$ is said to be \textit{effective} if $\varphi_x$ is injective for each point $x$ of $M$. 
\end{dfn}

Similarly to the fact that a symmetric space can be considered as a quandle, 
we can see that a generalized $s$-manifold is a $G$-family of quandles. 
A $G$-family of quandles is a generalized notion of quandle introduced in \cite{IIJO} to study handlebody-knots.

\begin{dfn}[\cite{IIJO}] \label{dfn:Gamma-family of quandles}
Let $X$ be a set and $G$ a group. 
Let $*^{g} : X \times X \to X\ (g\in G)$ be a family of binary operations 
on $X$ which satisfies the following conditions.
\begin{enumerate}
\item[(GQ1)] For any $x \in X$ and any $g\in G$, $x*^{g}x=x$.
\item[(GQ2)] For any $x,y \in X$, any $g, h\in G$ and the identity element $e\in G$, 
$$
x*^{gh}y=(x*^{g}y)*^{h}y \quad \text{and} \quad x*^{e}y=x.
$$
\item[(GQ3)] For any $x,y,z \in X$ and any $g, h\in G$, 
$$
(x*^{g}y)*^{h}z = (x*^{h}z)*^{h^{-1}gh}(y*^{h}z).
$$
\end{enumerate}
Then $(X, \{*^{g}\}_{ g\in G})$ is called a \textit{$G$-family of quandles}. 
\end{dfn}

We see that  a generalized $s$-structure $(\Gamma, \{\varphi_x \}_{x \in M})$ of $M$ induces a  structure of a $\Gamma$-family of quandles. 
\begin{prop}\label{prop:G-family of quandles}
Let $(M, \Gamma, \{\varphi_x \}_{x \in M})$ be a generalized $s$-manifold.
For each $\gamma \in \Gamma$, 
we define the binary operation $*^{\gamma}: M\times M \to M$ on $M$ by $x*^{\gamma}y=\varphi_{ y}(\gamma^{-1})(x)$.
Then $(M, \{*^{\gamma}\}_{ \gamma \in \Gamma})$ is a $\Gamma$-family of quandles.
\end{prop}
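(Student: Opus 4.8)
The plan is to verify the three axioms (GQ1), (GQ2), (GQ3) of Definition~\ref{dfn:Gamma-family of quandles} directly from the prescription $x*^{\gamma}y=\varphi_{y}(\gamma^{-1})(x)$. The only inputs needed are that each $\varphi_{y}\colon\Gamma\to\mathrm{Diff}(M)$ is a group homomorphism, so that $\varphi_{y}(e)=\mathrm{id}_{M}$, $\varphi_{y}(\alpha\beta)=\varphi_{y}(\alpha)\circ\varphi_{y}(\beta)$ and $\varphi_{y}(\alpha)^{-1}=\varphi_{y}(\alpha^{-1})$, together with conditions (2) and (3) of Definition~\ref{dfn:generalized s-manifold}. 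In particular the smoothness in condition (1) of Definition~\ref{dfn:generalized s-manifold} is not used, so the argument is purely algebraic.

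For (GQ1), condition (2) of Definition~\ref{dfn:generalized s-manifold} gives $x\in F(\varphi_{x}(\Gamma),M)$, hence $x*^{\gamma}x=\varphi_{x}(\gamma^{-1})(x)=x$ for every $\gamma\in\Gamma$. For (GQ2), the identity $x*^{e}y=\varphi_{y}(e)(x)=x$ is immediate from $\varphi_{y}(e)=\mathrm{id}_{M}$, and for $\gamma,\delta\in\Gamma$ one computes, using that $\varphi_{y}$ reverses products,
$$
x*^{\gamma\delta}y=\varphi_{y}\big((\gamma\delta)^{-1}\big)(x)=\varphi_{y}(\delta^{-1})\big(\varphi_{y}(\gamma^{-1})(x)\big)=(x*^{\gamma}y)*^{\delta}y .
$$
This is precisely why the operation $*^{\gamma}$ is defined with $\gamma^{-1}$ rather than with $\gamma$.

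The only place where condition (3) of Definition~\ref{dfn:generalized s-manifold} enters is (GQ3). Set $w:=y*^{\delta}z=\varphi_{z}(\delta^{-1})(y)$. Unwinding the definitions, the left-hand side of (GQ3) equals $(x*^{\gamma}y)*^{\delta}z=\varphi_{z}(\delta^{-1})\big(\varphi_{y}(\gamma^{-1})(x)\big)$, while the right-hand side equals $(x*^{\delta}z)*^{\delta^{-1}\gamma\delta}w=\varphi_{w}(\delta^{-1}\gamma^{-1}\delta)\big(\varphi_{z}(\delta^{-1})(x)\big)$. Since $x$ is arbitrary, it suffices to establish the operator identity $\varphi_{z}(\delta^{-1})\circ\varphi_{y}(\gamma^{-1})=\varphi_{w}(\delta^{-1}\gamma^{-1}\delta)\circ\varphi_{z}(\delta^{-1})$. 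I would obtain this by applying condition (3) with the substitution $(x,\gamma,y,\delta)\mapsto(z,\delta^{-1},y,\gamma^{-1})$, which yields
$$
\varphi_{z}(\delta^{-1})\circ\varphi_{y}(\gamma^{-1})\circ\varphi_{z}(\delta^{-1})^{-1}=\varphi_{\varphi_{z}(\delta^{-1})(y)}(\delta^{-1}\gamma^{-1}\delta)=\varphi_{w}(\delta^{-1}\gamma^{-1}\delta),
$$
and composing on the right with $\varphi_{z}(\delta^{-1})$ gives precisely the required identity.

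I do not expect any genuine obstacle: the whole proof is a routine unwinding of definitions, and the only points demanding care are keeping track of the inverses and of the order of composition in (GQ2) and (GQ3) and selecting the correct instance of condition (3). As a byproduct, taking $\delta=\gamma$ in the computation for (GQ3) gives $(x*^{\gamma}y)*^{\gamma}z=(x*^{\gamma}z)*^{\gamma}(y*^{\gamma}z)$, so that each single operation $*^{\gamma}$ satisfies the quandle axiom (Q3), while (GQ1) and (GQ2) give (Q1) and (Q2); hence each $(M,*^{\gamma})$ is in fact a quandle.
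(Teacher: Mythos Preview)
Your proof is correct and follows essentially the same approach as the paper: both verify (GQ1)--(GQ3) directly from the definitions, using that $\varphi_y$ is a homomorphism for (GQ1) and (GQ2) and invoking condition~(3) of Definition~\ref{dfn:generalized s-manifold} for (GQ3). Your write-up is slightly more explicit about which instance of condition~(3) is used and adds the observation that each $(M,*^{\gamma})$ is itself a quandle, but the underlying argument is identical.
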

\begin{proof} 
(GQ1) For any $x \in M$ and any $\gamma\in \Gamma$, $x*^{\gamma}x=\varphi_{x}(\gamma^{-1})(x)=x$.

(GQ2) For any $x,y \in M$ and any $\gamma, \delta\in \Gamma$, 
\begin{align*}
x*^{\gamma \delta}y
&=\varphi_{y}((\gamma \delta)^{-1})(x) 
=\varphi_{y}(\delta^{-1} \gamma^{-1})(x) \\
&=\varphi_{y}(\delta^{-1})\circ\varphi_{y}( \gamma^{-1})(x)
=(x*^{\gamma}y)*^{\delta}y
\end{align*}
and 
$$
x*^{e}y=\varphi_{y}(e)(x)=x
$$
hold.

(GQ3) For any $x,y,z \in M$ and any $\gamma,\delta \in \Gamma $,
we have 
\begin{align*}
(x*^{\gamma}y)*^{\delta}z &= \varphi_{z} (\delta^{-1})\circ  \varphi_{y} (\gamma^{-1}) (x)\\
&= \varphi_{\varphi_{z} (\delta^{-1})(y)} (\delta^{-1} \gamma^{-1} \delta ) \circ \varphi_{z} (\delta^{-1} )(x)\\
&= \varphi_{y*^{\delta} z} (\delta^{-1} \gamma^{-1} \delta ) (x*^{\delta}z)\\
&= (x*^{\delta}z)*^{\delta^{-1}\gamma \delta}(y*^{\delta}z).
\end{align*}
\end{proof}
\begin{rem}
The definition of $\Gamma$-family of quandles is given by the right action,
whereas that of generalized $s$-manifold is given by the left action. 
Hence, we use the inverse of $\gamma$ to define the binary operation $*^{\gamma}$ from $\gamma$. 
\end{rem}


\section{Constructions and examples}
\label{sec:constructions_and_examples}

In this section, we provide a method to construct generalized $s$-manifolds,
and give some concrete examples.

\subsubsection*{Example $1$} 

Let $T^2 = \mathbb{R}^2/\mathbb{Z}^2$ be the $2$-dimensional torus.
The canonical projection from $\mathbb{R}^2$ to $T^2$ is expressed as
$\mathbb{R}^2 \to T^2; \bm{x} \mapsto [\bm{x}]$,
and the unit element of $T^2$ is denoted by $o=[(0,0)]$.

Let $GL(2,\mathbb{Z})$ denote the group of $2 \times 2$ unimodular matrices:
$$
GL(2,\mathbb{Z}) = \left\{ \gamma =\left(\begin{array}{cc} a & b \\ c & d \end{array}\right)
\Big|  \begin{array}{l} a,b,c,d \in \mathbb{Z} \\ ad-bc=\pm 1 \end{array} \right\}.
$$
Since $GL(2,\mathbb{Z})$ acts on $\mathbb{R}^2$ preserving the integer lattice $\mathbb{Z}^2$,
it induces automorphisms of $T^2$.
If fact, $GL(2,\mathbb{Z})$ is the automorphism group of $T^2$ as a Lie group.

Let $\Gamma$ be a subgroup of $GL(2,\mathbb{Z})$.
For each $[\bm{x}] \in T^2$ and $\gamma \in \Gamma$, we define the diffeomorphism $\gamma_{[\bm{x}]}$
of $T^2$ by
$$
\gamma_{[\bm{x}]}([\bm{y}]) = [\gamma(\bm{y}-\bm{x})+\bm{x}] \qquad ([\bm{y}] \in T^2). 
$$
Then $\varphi_{[\bm{x}]} : \Gamma \to \mathrm{Diff}(T^2); \gamma \mapsto \gamma_{[\bm{x}]}$
is a homomorphism.
Note that $\gamma_{[\bm{x}]} = \tau_{[\bm{x}]} \circ \gamma_{o} \circ \tau_{-[\bm{x}]}$,
where $\tau_{[\bm{x}]}$ is the translation of $T^2$ by $[\bm{x}] \in T^2$.
We can verify that $\{ \varphi_{[\bm{x}]}\}_{[\bm{x}]\in T^2}$ satisfies the conditions (1) and (3)
of Definition~\ref{dfn:generalized s-manifold}.
In addition, if $o$ is isolated in $F(\varphi_o(\Gamma), T^2)$,
then $[\bm{x}]$ is also isolated in $F(\varphi_{[\bm{x}]}(\Gamma), T^2)$ for every $[\bm{x}] \in T^2$.
Therefore $\{ \varphi_{[\bm{x}]}\}_{[\bm{x}]\in T^2}$ satisfies  the condition (2)
of Definition~\ref{dfn:generalized s-manifold},
and $(\Gamma, \{ \varphi_{[\bm{x}]}\}_{[\bm{x}]\in T^2})$ is a generalized
$s$-structure on $T^2$.

For example, the following subgroups $\Gamma$ of $GL(2,\mathbb{Z})$ define
generalized $s$-structures on $T^2$.
\begin{enumerate}
\item $\Gamma=\left\langle \left[
    \begin{array}{cc}
         -1&0  \\
         0&-1
    \end{array}\right] \right\rangle \cong \mathbb{Z}_{2}$
\item $\Gamma=\left\langle 
    \left[
    \begin{array}{cc}
         -1&0  \\
         0&1
    \end{array}\right], 
    \left[
    \begin{array}{cc}
         1&0  \\
         0&-1
    \end{array}\right]
    \right\rangle \cong \mathbb{Z}_{2}\times\mathbb{Z}_{2}$
\item $\Gamma=\left\langle 
    \left[
    \begin{array}{cc}
         0&1  \\
         1&0
    \end{array}\right], 
    \left[
    \begin{array}{cc}
         0&-1  \\
         -1&0
    \end{array}\right]
    \right\rangle \cong \mathbb{Z}_{2}\times\mathbb{Z}_{2} $
\item $\Gamma=\left\langle 
    \left[
    \begin{array}{cc}
         -1&0  \\
         0&1
    \end{array}\right], 
    \left[
    \begin{array}{cc}
         0&1  \\
         1&0
    \end{array}\right]
    \right\rangle \cong \mathrm{D}_{4} $
\item $\Gamma=\left\langle 
    \left[
    \begin{array}{cc}
         -1&-1  \\
         1&0
    \end{array}\right]
    \right\rangle \cong \mathbb{Z}_{3} $
\item $\Gamma=\left\langle 
    \left[
    \begin{array}{cc}
         0&-1  \\
         1&0
    \end{array}\right]
    \right\rangle \cong \mathbb{Z}_{4} $
\item $\Gamma=\left\langle 
    \left[
    \begin{array}{cc}
         0&-1  \\
         1&1
    \end{array}\right]
    \right\rangle \cong \mathbb{Z}_{6} $
\item $\Gamma=\left\langle 
    \left[
    \begin{array}{cc}
         2&1  \\
         1&1
    \end{array}\right]
    \right\rangle \cong \mathbb{Z}$
\end{enumerate}

In the case (1), $\Gamma$ induces the structure of a symmetric space on $T^2$ as a compact Lie group.

In the case  (2), the symmetric transformation group $\Gamma_o$ at $o \in T^2$ is generated
by the reflections over the two axes.  This structure of a $\mathbb{Z}_{2}\times\mathbb{Z}_{2}$-symmetric space
is given as the direct product of two symmetric spaces. 

In the case (3), the symmetric transformation group $\Gamma_o$ at $o$ is generated
by the reflections over the two lines $x+y=0$ and $x-y=0$.
This structure of a $\mathbb{Z}_{2}\times \mathbb{Z}_{2}$-symmetric space
is different from that of (2).
Indeed, in Section~\ref{sec:polars_and_antipodal_sets_of_generalized_s-manifolds},
we will observe that these two generalized $s$-structures have different maximal antipodal sets.

In the case (4), $\Gamma$ is the dihedral group $\mathrm{D}_{4}$,
which is generated by that of (2) and (3).
Note that $\mathrm{D}_{4}$ is non-abelian.

In the cases  (5), (6) and (7), the $2$-torus $T^2$ has structures of $k$-symmetric spaces for $k=3,4,6$ (cf. \cite{BGPC}).

In the case (8), this action of $\mathbb{Z}$ on $T^2$ is known as 
a chaotic map called \textit{Arnold's cat map}.

\smallskip
This example shows that a manifold may admit various generalized $s$-structures.


\subsection{$\Gamma$-symmetric triple}
\label{sec:Gamma-symmetric_triple}

Lutz \cite{Lutz} introduced $\Gamma$-symmetric triples
to construct $\Gamma$-symmetric spaces (see also \cite{Bahturin-Goze}, \cite{Goze-Remm}, \cite{Quast-Sakai_2022}, and \cite{Terauchi}).
We generalize it to construct generalized $s$-manifolds.

\begin{dfn}
Let $G$ be a connected Lie group
and $\Gamma$ a finite subgroup or a compact Lie subgroup of the automorphism group $\mathrm{Aut}(G)$ of $G$.
Let $K$ be a closed subgroup of $G$ satisfying
\begin{equation}\label{eq:Gamma-triple}
F(\Gamma,G)_0 \subset K \subset F(\Gamma,G),
\end{equation}
 where $F(\Gamma,G)_0$ denotes the connected component of $F(\Gamma,G)$ containing the identity element $e \in G$.
Then the triple $(G,K,\Gamma)$ is called a \textit{$\Gamma$-symmetric triple}.
\end{dfn}

From a $\Gamma$-symmetric triple $(G,K,\Gamma)$,
we provide a $G$-equivariant generalized $s$-structure on the homogeneous space $G/K$ as follows.
Note that, since $G$ is connected, $\mathrm{Aut}(G)$ is a Lie group in a natural manner.
And,
$F(\Gamma, G)$ is a closed Lie subgroup of $G$.

\begin{thm}\label{thm:Gamma-symmetric_triple}
Let $(G,K,\Gamma)$ be a $\Gamma$-symmetric triple.
For each point $x=g_x K \in G/K$ and for each $\gamma \in \Gamma$, 
we define the diffeomorphism $\varphi_{x}(\gamma)$ of $G/K$ by
\begin{equation} \label{eq:Gamma-symmetric_triple}
\varphi_{x}(\gamma)(gK) := g_x\gamma(g_x^{-1}g)K \qquad (gK \in G/K).
\end{equation}
Then $\varphi_x : \Gamma \to \mathrm{Diff}(G/K)$ is an injective homomorphism,
and $(\Gamma, \{\varphi_x\}_{x\in G/K})$ is a $G$-equivariant, effective generalized $s$-structure on $G/K$.
\end{thm}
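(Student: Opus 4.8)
The plan is to verify the following in order: (a) formula~\eqref{eq:Gamma-symmetric_triple} yields a well-defined diffeomorphism $\varphi_x(\gamma)$ of $G/K$ depending smoothly on all of its arguments; (b) each $\varphi_x$ is a homomorphism and the family $\{\varphi_x\}$ is equivariant under the left-translation action of $G$ on $G/K$; (c) condition~(3) of Definition~\ref{dfn:generalized s-manifold}; (d) injectivity of each $\varphi_x$, i.e.\ effectiveness; (e) condition~(2). By (b) and transitivity of $G$ on $G/K$ it suffices to check (d) and (e) at the base point $o=eK$.

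For (a): if $g_x$ is replaced by $g_xk$ and $g$ by $gk'$ with $k,k'\in K$, then since $K\subset F(\Gamma,G)$ we have $\gamma(k)=k$ and $\gamma(k')=k'$, and a short computation gives $g_xk\,\gamma((g_xk)^{-1}gk')K=g_x\gamma(g_x^{-1}g)K$, so \eqref{eq:Gamma-symmetric_triple} descends to $G/K$. The map $(g_x,g)\mapsto g_x\gamma(g_x^{-1}g)K$ is smooth on $G\times G$ (products, inversion, the automorphism $\gamma$, and the projection $\pi\colon G\to G/K$ are all smooth), and since $\pi\times\pi$ is a surjective submersion admitting local sections, the induced map $\mu^\gamma$ on $G/K\times G/K$ is smooth; including the extra variable and using smoothness of the evaluation $\mathrm{Aut}(G)\times G\to G$ gives smoothness of $\mu$ when $\Gamma$ is a Lie group. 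For (b), substituting the formula twice gives $\varphi_x(\gamma)\circ\varphi_x(\delta)=\varphi_x(\gamma\delta)$ and $\varphi_x(e)=\mathrm{id}$, so $\varphi_x$ is a homomorphism and each $\varphi_x(\gamma)$ is a diffeomorphism with inverse $\varphi_x(\gamma^{-1})$; a similar substitution gives $L_a\circ\varphi_x(\gamma)\circ L_a^{-1}=\varphi_{ax}(\gamma)$ (taking $ag_x$ as a representative of $ax$), so each left translation is an automorphism of the structure and the structure is $G$-equivariant.

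For (c), write $x=g_xK$, $y=g_yK$ and set $h:=g_x\gamma(g_x^{-1}g_y)$, a representative of $\varphi_x(\gamma)(y)$. Applying \eqref{eq:Gamma-symmetric_triple} three times to $\varphi_x(\gamma)\circ\varphi_y(\delta)\circ\varphi_x(\gamma)^{-1}(gK)$ and repeatedly using $\gamma(\delta(a))=(\gamma\delta\gamma^{-1})(\gamma(a))$ for $a\in G$, one collects the result into the form $h\,(\gamma\delta\gamma^{-1})(h^{-1}g)K$, which is precisely $\varphi_{\varphi_x(\gamma)(y)}(\gamma\delta\gamma^{-1})(gK)$. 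This is a routine though somewhat lengthy bookkeeping computation inside $G$.

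The last two steps use the hypotheses on the triple essentially, and I expect (e) to be the main obstacle. Let $\mathfrak{g},\mathfrak{k}$ be the Lie algebras of $G,K$. Since $K\subset F(\Gamma,G)$ we get $\mathfrak{k}\subset\mathfrak{g}^{\Gamma}$, and since $F(\Gamma,G)_0\subset K$ we get the reverse inclusion, so $\mathfrak{k}=\mathfrak{g}^{\Gamma}$; as $\Gamma$ is compact there is a $\Gamma$-invariant complement $\mathfrak{m}$ to $\mathfrak{k}$ in $\mathfrak{g}$, whence $\mathfrak{m}^{\Gamma}=0$ and $T_o(G/K)\cong\mathfrak{m}$ $\Gamma$-equivariantly, with $\Gamma$ acting through $d\varphi_o(\gamma)_o=\gamma_*|_{\mathfrak{m}}$. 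For (d): if $\varphi_o(\gamma)=\mathrm{id}$ then $\gamma(g)\in gK$ for all $g\in G$; differentiating the curve $t\mapsto\exp(-tX)\exp(t\gamma_*X)$, which lies in $K$, at $t=0$ gives $(\gamma_*-1)X\in\mathfrak{k}$ for all $X\in\mathfrak{g}$, and for $X\in\mathfrak{m}$ the invariance of $\mathfrak{m}$ forces $(\gamma_*-1)X\in\mathfrak{m}\cap\mathfrak{k}=0$; hence $\gamma_*=\mathrm{id}_{\mathfrak{g}}$ and, $G$ being connected, $\gamma=\mathrm{id}_G$, so $\varphi_o$ (and by (b) every $\varphi_x$) is injective. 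For (e): averaging a Riemannian metric on $G/K$ over the compact group $\Gamma$ acting via $\varphi_o$ yields a $\Gamma$-invariant metric for which each $\varphi_o(\gamma)$ is an isometry fixing $o$; in geodesic normal coordinates at $o$ the action of $\Gamma_o$ becomes linear, namely $\gamma_*|_{\mathfrak{m}}$ on $\mathfrak{m}$, so its fixed-point set near $o$ corresponds to $\mathfrak{m}^{\Gamma}=0$ and $o$ is isolated in $F(\Gamma_o,G/K)$; by (b) the analogous statement holds at every point. (Alternatively one may invoke Bochner's linearization theorem for the action of the compact group $\Gamma$ near the fixed point $o$.) Steps (a)--(e) together establish the theorem.
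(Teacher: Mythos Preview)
Your proof is correct and follows essentially the same route as the paper's: both establish well-definedness and the $G$-equivariance law $\varphi_x(\gamma)=L_{g_x}\circ\varphi_o(\gamma)\circ L_{g_x}^{-1}$, reduce (d) and (e) to the base point, identify $\mathfrak{k}=\mathfrak{g}^{\Gamma}$ and choose a $\Gamma$-invariant complement $\mathfrak{m}$ (the paper uses a $\Gamma$-invariant inner product), and then argue injectivity from $(d\gamma)_e|_{\mathfrak{m}}=\mathrm{id}$ and isolation of $o$ via a $\Gamma$-invariant Riemannian metric and the exponential map. The only cosmetic difference is that for (d) you differentiate the curve $t\mapsto\exp(-tX)\exp(t\gamma_*X)\in K$ to obtain $(\gamma_*-1)X\in\mathfrak{k}$, whereas the paper reads this off directly from the relation $d(\varphi_o(\gamma))_o\circ (d\pi)_e|_{\mathfrak{m}}=(d\pi)_e|_{\mathfrak{m}}\circ (d\gamma)_e|_{\mathfrak{m}}$; these are two phrasings of the same linearization step.
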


Although we can prove the theorem in a similar way with Proposition~2.4 in \cite{Quast-Sakai_2022},
we give a proof below for completeness.

\begin{proof}
It is clear that,  for each $x\in G/K$ and $\gamma \in \Gamma$,
the map $\varphi_x(\gamma)$ is a (well-defined) diffeomorphism of $G/K$,
and we can verify that $\varphi_x : \Gamma \to \mathrm{Diff}(G/K)$ is a group homomorphism.
Note that, at the origin $o:=eK$ of $G/K$,
the diffeomorphism $\varphi_o(\gamma)$ is given by
\begin{equation}\label{eq:symmetric_transformation_at_the_origin}
\varphi_o(\gamma)(gK) = \gamma(g)K\quad (\forall gK \in G/K).
\end{equation}
We then have the following $G$-equivariance law: 
\begin{equation}\label{eq:G-equivariance_law}
\varphi_x(\gamma) = L_{g_x} \circ \varphi_{o}(\gamma) \circ L_{g_x^{-1}}\quad
(\forall x=g_x K \in G/K),
\end{equation}
where $L_g$ denotes the left action of $g\in G$ on $G/K$.
Thus, to show the injectivity of $\varphi_x$ for all $x \in G/K$,
it is sufficient to verify that $\varphi_o$ is injective.
For any $\gamma \in \Gamma$,
the differential $(d\gamma)_e$ at $e\in G$ is an automorphism of the Lie algebra $\mathfrak{g}$ of $G$,
that is $\Gamma$ also acts on $\mathfrak{g}$.
The condition (\ref{eq:Gamma-triple}) implies that the Lie algebra $\mathfrak{k}$ of $K$
coincides with that of $F(\Gamma, G)$, i.e.,
\begin{equation}\label{eq: Lie_algebra_of_K}
\mathfrak{k}=\{X\in  \mathfrak{g} \mid (d\gamma)_e(X)=X \ \text{for all}\ \gamma\in\Gamma\}.
\end{equation}
Since $\Gamma$ is compact, we can take a $\Gamma$-invariant inner product on $\mathfrak{g}$.
Then we have the orthogonal direct sum decomposition $\mathfrak{g} = \mathfrak{k} \oplus \mathfrak{m}$,
where $\mathfrak{m}$ is the orthogonal complement of $\mathfrak{k}$ in $\mathfrak{g}$.
Note that $\mathfrak{m}$ is invariant under the action of $\Gamma$ on $\mathfrak{g}$.
The differential $(d\pi)_e : \mathfrak{g} \to T_o(G/K)$ of the canonical projection $\pi:G\to G/K$
at $e \in G$ is surjective, and its kernel is $\mathfrak{k}$.
Hence the restriction of $(d\pi)_e$ to $\mathfrak{m}$,
denoted by $(d\pi)_e|_{\mathfrak{m}}:\mathfrak{m}\to T_o(G/K)$,
is a linear isomorphism.
The definition (\ref{eq:symmetric_transformation_at_the_origin}) of $\varphi_o(\gamma)$ implies
$\varphi_o(\gamma) \circ \pi = \pi \circ \gamma$, hence
\begin{equation}\label{eq:differential_of_symmetric_transformation}
d(\varphi_o(\gamma))_o \circ (d\pi)_e|_{\mathfrak{m}}
= (d\pi)_e|_{\mathfrak{m}} \circ (d\gamma)_e|_{\mathfrak{m}} 
\end{equation}
for all $\gamma \in \Gamma$,
where $d(\varphi_o(\gamma))_o$ is the differential of $\varphi_o(\gamma)$ at $o \in G/K$.
Therefore, if $\varphi_o(\gamma)$ is the identity on $G/K$,
then
$(d\gamma)_e|_{\mathfrak{m}}$ is the identity on $\mathfrak{m}$. 
Since $(d\gamma)_e$ is also the identity on $\mathfrak{k}$ by (\ref{eq: Lie_algebra_of_K}),
then $(d\gamma)_e$ is the identity on $\mathfrak{g}$. 
Therefore $\gamma$ is the identity of $G$ since $G$ is connected.
This shows that $\varphi_o$ has the trivial kernel, hence $\varphi_o$ is injective.

We shall verify that $(\Gamma, \{\varphi_x\}_{x\in G/K})$ satisfies
the three conditions of Definition~\ref{dfn:generalized s-manifold}.

The condition (1) is clear.
For each $\gamma\in\Gamma$, the multiplication map 
$$
\mu^\gamma:G/K\times G/K\to G/K;\
(x,y)\mapsto \varphi_x(\gamma)(y) = g_x\gamma(g_x^{-1}g_y)K
$$
is $C^\infty$.
In particular, when $\Gamma$ is a compact Lie subgroup of $\mathrm{Aut}(G)$, 
the map
$$
\mu : \Gamma \times G/K \times G/K \to G/K;\
(\gamma,x,y) \mapsto \varphi_x(\gamma)(y) = g_x\gamma(g_x^{-1}g_y)K
$$
is $C^\infty$.

Next we show the condition (2).
In the above argument, $\mathfrak{m}$ is the orthogonal complement of $\mathfrak{k}$ in $\mathfrak{g}$.
Hence, by (\ref{eq: Lie_algebra_of_K}) and (\ref{eq:differential_of_symmetric_transformation}),
for any $X\in T_o(G/K)$ with $X\neq 0$,
there exists $\gamma\in\Gamma$ such that $d(\varphi_o(\gamma))_o(X) \neq X$. 
Since $\Gamma$ is compact, we can take a Riemannian metric on $G/K$
which is invariant under the action of the symmetric transformation group $\varphi_o(\Gamma)$ at $o$.
We take an open ball $B$ in $T_o(G/K)$ around $o$ with sufficiently small radius
so that the Riemannian exponential map $\mathrm{Exp}_o$ of $G/K$ at $o$ is a diffeomorphism
from $B$ onto $\mathrm{Exp}_o(B)$.
For any $\gamma\in\Gamma$, we have
$$
\varphi_o(\gamma) \circ \mathrm{Exp}_o = \mathrm{Exp}_o \circ d(\varphi_o(\gamma))_o,
$$
since $\varphi_o(\gamma)$ is an isometry of $G/K$ fixing $o$.
Thus we have
$$
\mathrm{Exp}_o(B) \cap F(\varphi_o(\Gamma), G/K) = \{o\},
$$
hence $o$ is isolated in $F(\varphi_o(\Gamma), G/K)$.
This implies the condition (2) by the transitive action of $G$ on $G/K$
and the $G$-equivariance (\ref{eq:G-equivariance_law}) of $\{\varphi_x\}_{x\in G/K}$.

Finally we can verify the condition (3) by a direct calculation as follows.
For $x=g_xK, y=g_yK, z=g_zK \in G/K$,
\begin{align*}
&\varphi_{\varphi_x(\gamma)(y)}(\gamma\delta\gamma^{-1}) \big( \varphi_x(\gamma)(z) \big)\\
&= \big( \varphi_{g_x\gamma(g_x^{-1}g_y)K}(\gamma\delta\gamma^{-1})\big)
\big(g_x\gamma(g_x^{-1}g_z)K\big) \\
&= \big(g_x\gamma(g_x^{-1}g_y)\big)  (\gamma\delta\gamma^{-1})
\Big( \big(g_x\gamma(g_x^{-1}g_y)\big)^{-1} \big(g_x\gamma(g_x^{-1}g_z)\big) \Big) K \\
&= \big(g_x\gamma(g_x^{-1}g_y)\big)  (\gamma\delta\gamma^{-1})
\big( \gamma(g_y^{-1}g_x ) \gamma(g_x^{-1}g_z) \big) K \\
&= g_x\gamma(g_x^{-1}g_y) (\gamma\delta)(g_y^{-1}g_z) K \\
&= g_x\gamma \big( g_x^{-1}g_y\, \delta(g_y^{-1}g_z) \big) K \\
&= \varphi_x(\gamma) \big( g_y\, \delta(g_y^{-1}g_z) K \big) \\
&= \varphi_x(\gamma) \big( \varphi_y(\delta)(z) \big).
\end{align*}
Thus we complete the proof of the proposition.
\end{proof}

Similarly we can prove the following proposition.

\begin{prop}\label{prop:gamma-family of quandle}
Let $G$ be a group and $\Gamma$ a subgroup of $\mathrm{Aut}(G)$ of $G$.
Let $K$ be a subgroup of $G$ satisfying
$$
K \subset F(\Gamma,G).
$$
For each $\gamma \in \Gamma$, we define
$*^\gamma : G/K \times G/K \to G/K$ by
$$
x *^\gamma y =  g_y \gamma^{-1}(g_y^{-1} g_x)K \quad \text{for } x=g_xK \text{ and } y=g_yK \text{ in } G/K.
$$
Then $(G/K, \{*^\gamma \}_{\gamma\in\Gamma})$ is a $G$-equivariant $\Gamma$-family of quandle.
\end{prop}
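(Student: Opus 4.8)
The plan is to imitate the proof of Theorem~\ref{thm:Gamma-symmetric_triple}, but since the present statement is purely algebraic—no smoothness or isolated-fixed-point conditions enter—the whole argument reduces to a sequence of coset manipulations in $G/K$. First I would check that $*^\gamma$ is well defined, i.e.\ independent of the chosen representatives. If $g_x' = g_x k$ and $g_y' = g_y k'$ with $k,k'\in K$, then expanding $g_y'\,\gamma^{-1}\bigl((g_y')^{-1}g_x'\bigr)$ and using that $\gamma^{-1}$ is an automorphism of $G$ together with $\gamma^{-1}(k)=k$ and $\gamma^{-1}(k')=k'$—which hold because $K\subset F(\Gamma,G)$ and $\gamma^{-1}\in\Gamma$—collapses the expression to $g_y\,\gamma^{-1}(g_y^{-1}g_x)\,k K = g_y\,\gamma^{-1}(g_y^{-1}g_x)K$. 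This is the only place where the hypothesis $K\subset F(\Gamma,G)$ is genuinely used, and the only point that requires any care.

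Next I would verify the three axioms of Definition~\ref{dfn:Gamma-family of quandles}. For (GQ1), taking $g_y=g_x$ gives $x*^\gamma x = g_x\,\gamma^{-1}(e)K = x$. For (GQ2), $x*^e y = g_y g_y^{-1}g_x K = x$ is immediate, and for the multiplicativity identity one picks $g_z:=g_y\,\gamma^{-1}(g_y^{-1}g_x)$ as a representative of $z:=x*^\gamma y$ and computes $z*^\delta y = g_y\,\delta^{-1}(g_y^{-1}g_z)K = g_y\,\delta^{-1}\bigl(\gamma^{-1}(g_y^{-1}g_x)\bigr)K = g_y\,(\gamma\delta)^{-1}(g_y^{-1}g_x)K = x*^{\gamma\delta}y$, using $(\gamma\delta)^{-1}=\delta^{-1}\gamma^{-1}$ as maps. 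For (GQ3), I would fix $g_p:=g_y\,\gamma^{-1}(g_y^{-1}g_x)$, $g_u:=g_z\,\delta^{-1}(g_z^{-1}g_x)$, $g_v:=g_z\,\delta^{-1}(g_z^{-1}g_y)$ as representatives of $x*^\gamma y$, $x*^\delta z$, $y*^\delta z$, compute $g_v^{-1}g_u=\delta^{-1}(g_y^{-1}g_x)$, hence $(\delta^{-1}\gamma\delta)^{-1}(g_v^{-1}g_u)=\delta^{-1}\gamma^{-1}\delta\bigl(\delta^{-1}(g_y^{-1}g_x)\bigr)=\delta^{-1}\bigl(\gamma^{-1}(g_y^{-1}g_x)\bigr)$, so that $(x*^\delta z)*^{\delta^{-1}\gamma\delta}(y*^\delta z)=g_v\,\delta^{-1}\bigl(\gamma^{-1}(g_y^{-1}g_x)\bigr)K=g_z\,\delta^{-1}(g_z^{-1}g_y)\,\delta^{-1}\bigl(\gamma^{-1}(g_y^{-1}g_x)\bigr)K$. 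On the other side, $(x*^\gamma y)*^\delta z = g_z\,\delta^{-1}(g_z^{-1}g_p)K = g_z\,\delta^{-1}\bigl(g_z^{-1}g_y\,\gamma^{-1}(g_y^{-1}g_x)\bigr)K$, which equals the same expression once $\delta^{-1}$ is distributed over the product. Thus the two sides agree.

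Finally I would record $G$-equivariance: for $g\in G$, a representative of $L_g x$ is $g g_x$, and $(L_g x)*^\gamma(L_g y) = g g_y\,\gamma^{-1}\bigl(g_y^{-1}g^{-1}g g_x\bigr)K = g\bigl(g_y\,\gamma^{-1}(g_y^{-1}g_x)\bigr)K = L_g(x*^\gamma y)$, so each $*^\gamma$ is $G$-equivariant. I do not anticipate a real obstacle here: the only subtleties are keeping the order correct in $(\gamma\delta)^{-1}=\delta^{-1}\gamma^{-1}$ and $(\delta^{-1}\gamma\delta)^{-1}=\delta^{-1}\gamma^{-1}\delta$, and remembering which factor lies in $K$ when invoking well-definedness; everything else is direct manipulation of cosets.
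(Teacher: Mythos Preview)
Your proposal is correct and is precisely the direct coset computation the paper has in mind when it says ``Similarly we can prove the following proposition''; the paper gives no further details beyond that remark. Your well-definedness check, the verification of (GQ1)--(GQ3), and the $G$-equivariance are all accurate, and your attention to the order in $(\gamma\delta)^{-1}=\delta^{-1}\gamma^{-1}$ and $(\delta^{-1}\gamma\delta)^{-1}=\delta^{-1}\gamma^{-1}\delta$ is exactly what is needed.
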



\subsubsection*{Example $2$}
We introduce an example of a generalized $s$-manifold where $\Gamma$ is a Lie group.
Let $G$ be a compact connected semisimple Lie group and $T$ a maximal torus of $G$. 
The quotient space $G/T$ is called a flag manifold. 
We define a generalized $s$-structure on $G/T$ with $\Gamma=T$. 
The natural projection $\pi: G \to G/T$ is given by $\pi(g)=[g]:=gT$. 
Let $\Gamma'$ be the subgroup of $\mathrm{Aut}(G)$ defined by 
$
\Gamma'=\{ \mathrm{I}_{a} \mid a\in T\}. 
$
Here $\mathrm{I}_{a}$ is the inner automorphism of $G$ with respect to $a\in T$.
Then we immediately see that $T\subset F(\Gamma', G)$.
We also have
\begin{align*}
\mathrm{Lie}(F(\Gamma', G))
&=\{ X\in \mathfrak{g} \mid \mathrm{Ad}(a)X=X \ (\forall a\in T)\}\\
&=\{ X\in \mathfrak{g} \mid [Y, X]=0 \ (\forall Y\in \mathrm{Lie}(T))\}
=\mathrm{Lie}(T).
\end{align*}
Thus we have $F(\Gamma', G)_{0} \subset T$.
Then, $(G, T, \Gamma')$ is a $\Gamma'$-symmetric triple.
From Theorem~\ref{thm:Gamma-symmetric_triple},
$G/T$ has the generalized $s$-structure $(\Gamma', \{\varphi_{x}\}_{x\in G/T})$.
Then the natural homomorphism $f:T \to \Gamma'; a\mapsto \mathrm{I}_{a}$ is surjective. 
Hence, $(G/T, T, \{\varphi_{x}\circ f\}_{x\in G/T})$ is a generalized $s$-manifold.

\subsubsection*{Example $3$}
More generally, 
we show that 
a generalized flag manifold also has a generalized $s$-structure with a certain torus group $\Gamma$.
Let $G$ be a compact connected semisimple Lie group.
The Lie algebra of $G$ is denoted by $\mathfrak{g}$. 
For $X\in \mathfrak{g}\setminus \{0\}$, 
the adjoint orbit $M:=\mathrm{Ad}(G)X$ is called a generalized flag manifold. 
Then $M$ is diffeomorphic to $G/G_{X}$, 
where $G_{X}:=\{g\in G \mid \mathrm{Ad}(g)X=X\}$.  
We define a generalized $s$-structure on $M \cong G/G_{X}$ with $\Gamma=\mathrm{Z}(G_{X})_{0}$.
Here $\mathrm{Z}(G_{X})_0$ is the identity component of
the center $\mathrm{Z}(G_{X})$ of the isotropy subgroup $G_{X}$.
Let $\Gamma'$ be the subgroup of $\mathrm{Aut}(G)$
defined by 
$
\Gamma'=\{ \mathrm{I}_{a} \mid a\in \mathrm{Z}(G_{X})_{0}\}. 
$
Here $\mathrm{I}_{a}$ is the inner automorphism of $G$ with respect to $a\in \mathrm{Z}(G_{X})_{0}$.
Then we see that $G_{X}\subset F(\Gamma', G)$.
We also have
\begin{align*}
\mathrm{Lie}(G_{X})&=\{Y \in \mathfrak{g}\mid \mathrm{ad}(Y)X=0\},  \\
\mathrm{Lie}(F(\Gamma', G))
&=\{ Y\in \mathfrak{g} \mid \mathrm{Ad}(a)Y=Y \ (\forall a \in \mathrm{Z}(G_{X})_{0})\}.
\end{align*}
For each $Y\in \mathrm{Lie}(F(\Gamma', G))$, 
since $\mathrm{Ad}(\exp (t X))Y=Y \ (\forall t\in \mathbb{R})$,
we have $[X, Y]=0$.
Thus we have $Y\in \mathrm{Lie}(G_{X})$ and $\mathrm{Lie}(F(\Gamma', G)) \subset \mathrm{Lie}(G_{X})$.
Therefore, $(G, G_X,\Gamma')$ is a $\Gamma'$-symmetric triple.
From Theorem~\ref{thm:Gamma-symmetric_triple},
$G/G_{X}$ has the generalized $s$-structure $(\Gamma', \{\varphi_{x}\}_{x\in G/G_{X}})$.
Then the natural homomorphism $f:\mathrm{Z}(G_{X})_{0} \to \Gamma';  a \mapsto \mathrm{I}_{a}$ is surjective. 
Hence, $(G/G_{X}, Z(G_{X})_{0}, \{\varphi_{x}\circ f\}_{x\in G/G_{X}})$ is a generalized $s$-manifold.

This structure is related to the study of the intersection of real forms of complex flag manifolds (cf.~\cite{IIOST}).  

\subsubsection*{Example $4$}
We can also construct generalized $s$-manifolds using compact symmetric triads. 
We take a compact connected semisimple Lie group $G$ and two involutive automorphisms $\theta_{1}, \theta_{2}$ of $G$. 
Let $K_{1}, K_{2}$ be closed subgroups of $G$.  
If 
$$
F(\theta_{i}, G)_{0} \subset K_{i}\subset F(\theta_{i}, G)\quad (i=1, 2)
$$
holds, 
then $(G, K_{1}, K_{2})$ is called  a compact symmetric triad.
That is, 
$(G, K_{1})$ and $(G, K_{2})$ are compact symmetric pairs. 
Thus, $G/K_{i}$ has the structure of symmetric space.
Indeed, 
we can define the point symmetry $s_{gK_{i}}$ on $G/K_{i}$ by 
$$
s_{gK_{i}}(g'K_{i}):=g\theta_{i}(g^{-1}g')K_{i}\quad (g' K_{i}\in G/K_{i})
$$
for each point  $gK_{i} \in G/K_{i}$. 

For a given compact symmetric triad $(G, K_{1}, K_{2})$, 
we construct the following generalized $s$-structure on the quotient manifold $G/H$ where $H=K_{1}\cap K_{2}$ . 
Let 
$\Gamma = \langle \theta_{1}, \theta_{2} \rangle$ be the subgroup of $\mathrm{Aut}(G)$ generated by $\theta_{1}$ and $\theta_{2}$.
Note that $\Gamma$ is not always finite.
We define the symmetric transformation $\varphi_{gH}(\gamma)$ on $G/H$ by 
$$
\varphi_{gH}(\gamma)(g'H)=g\gamma(g^{-1}g')H\quad \quad (g' H\in G/H)
$$
for each $gH\in G/H$ and $\gamma \in \Gamma$
as same as (\ref{eq:Gamma-symmetric_triple}).
Then we prove that 
$(\Gamma, \{\varphi_{gH}\}_{gH \in G/H})$ is a generalized $s$-structure on $G/H$.
When $\Gamma$ is finite, we can apply Theorem~\ref{thm:Gamma-symmetric_triple}.
Even if $\Gamma$ is infinite, $(G/H, \{\ast ^{\gamma}\}_{\gamma \in \Gamma})$ is a $\Gamma$-family of quandles from Proposition~\ref{prop:gamma-family of quandle}.
In particular, (GQ3) of Definition~\ref{dfn:Gamma-family of quandles} holds for $\{\ast ^{\gamma}\}_{\gamma \in \Gamma}$.
This implies that $(\Gamma, \{\varphi_{gH}\}_{gH \in G/H})$ satisfies (3) of Definition~\ref{dfn:generalized s-manifold}.
Then, (1) of Definition~\ref{dfn:generalized s-manifold} is clear, furthermore
we can verify (2) of Definition~\ref{dfn:generalized s-manifold} directly.
Indeed,
we show that the action of $\varphi_{gH}(\Gamma)$ has $gH$ as an isolated fixed point for each $gH\in G/H$.  
First, we show that $o:=eH$ is an isolated fixed point of the action of $\varphi_{o}(\Gamma)$. 
We set 
$$
\mathfrak{k}_{i}=\{X\in \mathfrak{g} \mid (d\theta_{i})_{e}(X)=X\}, \quad 
\mathfrak{m}_{i}=\{X\in \mathfrak{g} \mid (d\theta_{i})_{e} (X)=-X\}\  (i=1,2).
$$
Then, the Lie algebra $\mathfrak{h}$ of $H$ coincides with $\mathfrak{k}_{1}\cap \mathfrak{k}_{2}$, 
thus we have 
$\mathrm{Ad}(H) (\mathfrak{m}_{1}+\mathfrak{m}_{2}) =\mathfrak{m}_{1}+\mathfrak{m}_{2}$.
When $\theta_{1}$ and $\theta_{2}$ are commutative, 
then
$$
\mathfrak{g}=\mathfrak{h} \oplus (\mathfrak{m}_{1}\cap \mathfrak{m}_{2})\oplus (\mathfrak{k}_{1}\cap \mathfrak{m}_{2})\oplus (\mathfrak{m}_{1}\cap \mathfrak{k}_{2})
=\mathfrak{h} \oplus (\mathfrak{m}_{1} + \mathfrak{m}_{2})
$$
holds.
Even if $\theta_{1}$ and $\theta_{2}$ are not commutative,
we can prove that $\mathfrak{m}_{1}+\mathfrak{m}_{2}$ coincides with the orthogonal complement of $\mathfrak{h}$ in $\mathfrak{g}$ with respect to the negative of the Killing form of $\mathfrak{g}$.
Therefore, we obtain the orthogonal direct sum decomposition
$\mathfrak{g}= \mathfrak{h}\oplus (\mathfrak{m}_{1}+\mathfrak{m}_{2})$,
where the subspace $\mathfrak{m}_{1}+\mathfrak{m}_{2}$ is $\mathrm{Ad}(H)$-invariant.
This means that $G/H$ has a normal homogeneous metric,
and the tangent space $T_{o}(G/H)$ can be identified with $\mathfrak{m}_{1}+\mathfrak{m}_{2}$.
Then the Riemannian exponential map $\mathrm{Exp}_o : \mathfrak{m}_{1}+\mathfrak{m}_{2} \to G/H$ at $o$
is given as
$$
\mathrm{Exp}_o(X) = \exp (X)H \in G/H \quad (X \in \mathfrak{m}_{1}+\mathfrak{m}_{2}).
$$
We take an $\mathrm{Ad}(H)$-invariant neighborhood $U$ around $0 \in \mathfrak{m}_{1}+\mathfrak{m}_{2}$
so that $\mathrm{Exp}_o|_U$ is an $H$-equivariant diffeomorphism from $U$ to $\mathrm{Exp}_o(U)$.
Since $(d\theta_{1})_{e}$ and $(d\theta_{2})_{e}$ are orthogonal transformations, we can assume $(d\theta_{1})_{e}(U)=U$ and $(d\theta_{2})_{e}(U)=U$. 
In fact, an open ball centered at $0\in \mathfrak{m}_{1}+\mathfrak{m}_{2}$ that is contained in $U$ has that property. 
We suppose that there exists a vector $X\in U$ such that
$\varphi_{o}(\gamma)(\mathrm{Exp}_o(X)) = \mathrm{Exp}_o(X)$ holds for all $\gamma \in \Gamma$.
Since each $\gamma\in \Gamma$ is an automorphism on $G$, we have
\begin{align*}
\mathrm{Exp}_o(X)
&= \varphi_{o}(\gamma)(\mathrm{Exp}_o(X))
= \gamma(\exp(X))H \\
&= \exp( (d\gamma)_{e}(X))H
= \mathrm{Exp}_o( (d\gamma)_{e}(X)).
\end{align*}
Thus we have $X=(d\gamma)_{e} (X)$,
since $(d\gamma)_{e}(X) \in U$ and $\mathrm{Exp}_o|_{U}$ is a diffeomorphism.
In particular, we have $X=(d\theta_{1})_{e}(X)=(d\theta_{2})_{e}(X)$.
Hence $X\in \mathfrak{k}_{1}\cap\mathfrak{k}_{2}$ holds.
Since $X\in U\subset \mathfrak{m}_{1}+\mathfrak{m}_{2}$, 
we can see that $X=0\in (\mathfrak{k}_{1}\cap\mathfrak{k}_{2}) \cap (\mathfrak{m}_{1}+\mathfrak{m}_{2})=\{0\}$ and $\mathrm{Exp}_o(X)=o$.
Therefore, $o=eH$ is an isolated fixed point of the action of $\varphi_{o}(\Gamma)$. 
By the $G$-equivariance law (\ref{eq:G-equivariance_law}),
we see that $gH$ is an isolated fixed point of the action of $\varphi_{gH}(\Gamma)$
for any point $gH \in G/H$. 
Therefore, $(G/H, \Gamma, \{\varphi_{gH}\}_{gH \in G/H})$ is a generalized $s$-manifold.


\section{Polars and antipodal sets of generalized $s$-manifolds}
\label{sec:polars_and_antipodal_sets_of_generalized_s-manifolds}

Chen and Nagano introduced the notions of polar and antipodal set for compact Riemannian symmetric spaces, 
and defined an invariant called 2-number as the supremum of the cardinality of antipodal sets (cf.\ \cite{Chen-Nagano1988}).
We define these notions for generalized $s$-manifolds.
In this section, 
let $(M, \Gamma, \{\varphi_x \}_{x \in M})$ be  a generalized $s$-manifold.

\begin{dfn}
For $x\in M$ and $ \gamma \in \Gamma$, 
the fixed point set $F(\varphi_x(\gamma), M)$ of the symmetric transformation $\varphi_x(\gamma)$ at $x$ for $\gamma$
is decomposed into the disjoint union of connected components $M^{\gamma}_{\lambda }$:
$$
F(\varphi_{x}(\gamma), M) = \bigcup_{\lambda \in \Lambda} M^{\gamma}_{\lambda }.
$$
Then each connected component $M^{\gamma}_{\lambda}$ is called a \textit{$\gamma$-polar} at $x$ in $M$. 
The connected component containing $x$ is especially denoted by $M_0^\gamma$.
\end{dfn}

\begin{dfn}
For $x \in M$,  the fixed point set $F(\varphi_x(\Gamma), M)$ of the symmetric transformation group $\varphi_x(\Gamma)$ at $x$
is decomposed into the disjoint union of connected components $M_{\lambda}$:
$$
F(\varphi_{x}(\Gamma), M) = \bigcup_{\lambda \in \Lambda} M_{\lambda}.
$$
Then each connected component $M_{\lambda}$ is called a \textit{polar} at $x$ in $M$.
In particular, a polar consisting of a single point is called a \textit{pole} at $x$. 
By the condition~(2) of Definition~\ref{dfn:generalized s-manifold}, 
the singleton $\{x\}$ becomes a pole at $x$, and it is termed as the \textit{trivial pole}. 
Let $M_0$ denote the trivial pole $\{x\}$.
\end{dfn}

\begin{dfn}
Let $A$ be a subset of $M$. 
\begin{enumerate}
\item For any $x, y \in A$ and $\gamma \in \Gamma$, 
if $\varphi_{x}(\gamma)(y)=y $
holds, that is, $y \in F(\varphi_x(\Gamma), M)$, 
we call $A$ an \textit{antipodal set} of $(M, \Gamma, \{\varphi_{x}\}_{x\in M})$.
\item If $A$ is an antipodal set and if any antipodal set containing $A$ coincides with $A$, then $A$ is called a \textit{maximal antipodal set}.  
\item Define
$$
\#_{\Gamma} (M) := \sup \{\# (A) \mid A\text{ is an antipodal set of $M$} \},
$$ 
then we call $\#_{\Gamma} (M)$ the \textit{antipodal number} of $(M, \Gamma, \{\varphi_{x}\}_{x\in M} )$. 
The antipodal number $\#_{\Gamma} (M)$ can be infinite.
\item A maximal antipodal set $A$ is said to be \textit{great} if  $\#_{\Gamma} (M)=\# (A)$.
\end{enumerate}
\end{dfn}

\begin{rem}
A great antipodal set is a maximal antipodal set, 
but a maximal antipodal set is not necessarily a great antipodal set. 
\end{rem}

Now we describe polars and antipodal sets
in some examples of generalized $s$-manifolds
given in Section~\ref{sec:constructions_and_examples}.

\subsubsection*{Example $1$} 

On the $2$-dimensional torus $T^2$,
we provided eight generalized $s$-structures (1)--(8).
For each case,
the fixed point set $F(\varphi_o(\Gamma),T^2)$ by the symmetric transformation group $\varphi_o(\Gamma)$ at $o$
is discrete,
and it is a great antipodal set as follows.

\begin{itemize}
\item In the cases (1) and (2)
\[
F(\varphi_o(\Gamma),T^2) = \left\{
[(0,0)],\
[(0,1/2)],\
[(1/2,0)],\
[(1/2,1/2)]
\right\}
\]

\item In the cases (3), (4), and (6)
\[
F(\varphi_o(\Gamma),T^2) = \left\{
[(0,0)],\
[(1/2,1/2)]
\right\}
\]

\item In the case (5)
\[
F(\varphi_o(\Gamma),T^2) = \left\{
[(0,0)],\
[(1/3,1/3)],\
[(2/3,2/3)]
\right\}
\]

\item In the cases (7), (8)
\[
F(\varphi_o(\Gamma),T^2) = \left\{
[(0,0)]
\right\}
\]
\end{itemize}
These examples show that 
antipodal sets and the antipodal number depend on generalized $s$-structures
on a manifold.

\subsubsection*{Examples $2$ and $3$}

We describe polars and antipodal sets of generalized flag manifolds.
Let $G$ be a compact connected semisimple Lie group.
For $X \in \mathfrak{g} \setminus \{0\}$,
we consider the generalized flag manifold $M:= \mathrm{Ad}(G)X$, that is the adjoint orbit of $G$ in $\mathfrak{g}$.
Then $M$ is diffeomorphic to the coset manifold $G/G_X$,
and we define a generalized $s$-structure $(\Gamma, \{ \varphi_x \circ f\}_{x \in G/G_X})$ on $G/G_X$,
where $\Gamma=\mathrm{Z}(G_x)_0$ as in Example~3 in Section~\ref{sec:constructions_and_examples}.
In particular, when $M=\mathrm{Ad}(G)X$ is a regular orbit, the isotropy subgroup $G_X$ is a maximal torus $T$ of $G$,
hence $M \cong G/T$ is a flag manifold as in Example~2.

We give the inner product on $\mathfrak{g}$ by the negative of the Killing form.
The orbit $M$ is a submanifold of the Euclidean space $\mathfrak{g}$.
For $Y \in \mathfrak{g}$, we define
\[
Y^*_X := \frac{d}{dt}\Big|_{t=0} \mathrm{Ad}(\exp tY)X = [Y,X] = -\mathrm{ad}(X)Y.
\]
Since $\mathfrak{g} \to T_XM;\ Y \mapsto Y^*_X = -\mathrm{ad}(X)Y$ is surjective,
$T_XM$ is identified with the linear subspace $\mathrm{Im}(\mathrm{ad}(X))$ in $\mathfrak{g}$.
And, by the orthogonal direct sum decomposition $\mathfrak{g} = \mathrm{Im}(\mathrm{ad}(X)) \oplus \mathrm{Ker}(\mathrm{ad}(X))$,
the normal space $T_X^\perp M$ is identified with $\mathrm{Ker}(\mathrm{ad}(X))$.

For $a \in \Gamma = \mathrm{Z}(G_X)_0$, the symmetric transformation $\psi_o(a):=(\varphi_o \circ f)(a) \in \mathrm{Diff}(G/G_X)$ at the origin $o \in G/G_X$
is defined by
\[
\psi_o(a) : G/G_X \to G/G_X;\ gG_X \mapsto \mathrm{I}_a(g)G_X = a(gG_X).
\]
Hence, via the diffeomorphism
\[
M=\mathrm{Ad}(G)X \to G/G_X;\ \mathrm{Ad}(g)X \mapsto gG_X,
\]
the symmetric transformation $\psi_X(a) \in \mathrm{Diff}(M)$ at $X \in M$ is given by
\[
\psi_X(a) : M \to M;\ Y \mapsto \mathrm{Ad}(a)Y.
\]

\begin{prop}[\cite{IIOST}, Theorem~2.2] \label{prop:antipodal_condition_in_generalized_flag_manifold}
A point $Y \in M$ is fixed by the symmetric transformation group $\psi_X(\Gamma)$ at $X \in M$
if and only if $[X,Y]=0$, that is,
\[
F(\psi_X(\Gamma), M)
= \{ Y \in M \mid [X,Y] =0 \}
= M \cap \mathrm{Ker}(\mathrm{ad}(X)).
\]
\end{prop}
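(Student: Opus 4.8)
The plan is to prove both equalities in the statement, the second of which is essentially the identification of the tangent data at $X$ already set up in the excerpt. I would first establish the set-theoretic description $F(\psi_X(\Gamma), M) = \{ Y \in M \mid [X,Y] = 0\}$, and then note that the further equality with $M \cap \mathrm{Ker}(\mathrm{ad}(X))$ is immediate from the definition $\mathrm{Ker}(\mathrm{ad}(X)) = \{ Y \in \mathfrak{g} \mid [X,Y]=0\}$, since $M \subset \mathfrak{g}$ and the symmetric transformation $\psi_X(a)$ acts on $Y \in M$ by $Y \mapsto \mathrm{Ad}(a)Y$.

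For the main equality, recall $\Gamma = \mathrm{Z}(G_X)_0$ and, under the identifications in the excerpt, $\psi_X(a)(Y) = \mathrm{Ad}(a)Y$ for $a \in \Gamma$ and $Y \in M$. So $Y \in F(\psi_X(\Gamma), M)$ means $\mathrm{Ad}(a)Y = Y$ for all $a \in \mathrm{Z}(G_X)_0$. Differentiating at the identity, this forces $[Z, Y] = 0$ for all $Z$ in the Lie algebra $\mathfrak{z}$ of $\mathrm{Z}(G_X)_0$; conversely, if $[Z,Y]=0$ for all $Z \in \mathfrak{z}$ then, since $\mathrm{Z}(G_X)_0$ is connected and generated by $\exp \mathfrak{z}$, we get $\mathrm{Ad}(a)Y = Y$ for all $a \in \mathrm{Z}(G_X)_0$. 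Thus the fixed-point condition is equivalent to $Y \in \mathfrak{g}$ (with $Y \in M$) satisfying $[Z,Y] = 0$ for all $Z \in \mathfrak{z}$. The crux is therefore to show that, for $Y \in M$, the condition "$[Z,Y]=0$ for all $Z \in \mathfrak{z}$" is equivalent to "$[X,Y]=0$".

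One direction is trivial: since $X \in \mathrm{Lie}(G_X)$ and, as computed in Example~3, $X$ lies in $\mathfrak{z} = \mathrm{Lie}(\mathrm{Z}(G_X)_0)$ (indeed $\exp(tX) \in \mathrm{Z}(G_X)_0$ because $\mathrm{Ad}(\exp tX)$ fixes $X$ and centralizes $G_X$), the hypothesis $[Z,Y]=0$ for all $Z \in \mathfrak{z}$ gives $[X,Y]=0$ in particular. The substantive direction is the converse: assuming $Y \in M = \mathrm{Ad}(G)X$ and $[X,Y]=0$, we must deduce $[Z,Y]=0$ for every $Z \in \mathfrak{z}(\mathrm{Lie}(G_X))$. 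Here I would use that $Y \in \mathrm{Ad}(G)X$ together with $[X,Y]=0$: write $Y = \mathrm{Ad}(g)X$; then $[X, \mathrm{Ad}(g)X]=0$ means $\mathrm{Ad}(g)X$ and $X$ lie in a common maximal abelian subalgebra (both being regular-or-not elements that commute), and in fact $\mathrm{Ad}(g)X \in \mathrm{Lie}(G_X)$; since $\mathrm{Ad}(g)X$ is $\mathrm{Ad}(G_X)$-fixed (as it commutes with $X$ and $\mathrm{Lie}(G_X) = \mathfrak{z}_\mathfrak{g}(X)$ has the property that elements commuting with $X$ are central in $\mathrm{Lie}(G_X)$ when $G$ is compact — this needs the structure theory), one gets $Y \in \mathfrak{z}$, hence $\mathrm{Z}(G_X)_0$ being abelian forces $[Z,Y]=0$ for all $Z \in \mathfrak{z}$.

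The main obstacle is precisely this last point: showing that for $Y \in M$ with $[X,Y]=0$ one has $Y \in \mathrm{Lie}(\mathrm{Z}(G_X)_0)$, equivalently that $Y$ commutes with all of $\mathrm{Lie}(G_X)$, not merely with $X$. This requires the compact structure theory — $\mathrm{Lie}(G_X) = \mathfrak{z}_\mathfrak{g}(X)$, and for $Y \in \mathrm{Ad}(G)X$ the centralizer $\mathfrak{z}_\mathfrak{g}(Y)$ is conjugate to $\mathfrak{z}_\mathfrak{g}(X)$; if in addition $[X,Y]=0$ then $Y \in \mathfrak{z}_\mathfrak{g}(X) = \mathrm{Lie}(G_X)$ and, being in the same adjoint orbit, $Y$ lies in the center of $\mathrm{Lie}(G_X)$ (an element of a compact Lie algebra that is conjugate into a given centralizer and lies in that centralizer must be central in it). Since the result is cited as \cite{IIOST}, Theorem~2.2, I would simply invoke that reference for the full argument, but for completeness I would spell out the two inclusions as above, isolating the structure-theoretic fact as a short lemma if the space permits.
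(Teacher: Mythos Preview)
The paper does not supply its own proof of this proposition; it simply cites \cite{IIOST}, Theorem~2.2. Your reduction is correct: with $\mathfrak{z}:=\mathrm{Lie}(\mathrm{Z}(G_X)_0)=\mathfrak{z}(\mathfrak{g}_X)$, one must show that for $Y\in M$ the condition ``$[Z,Y]=0$ for all $Z\in\mathfrak{z}$'' is equivalent to ``$[X,Y]=0$'', and your forward implication via $X\in\mathfrak{z}$ is fine.

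The converse, however, is where your argument goes astray. You try to prove the stronger statement that $Y\in\mathfrak{z}(\mathfrak{g}_X)$, invoking the principle that ``an element of a compact Lie algebra that is conjugate into a given centralizer and lies in that centralizer must be central in it''. That principle is false. In $\mathfrak{g}=\mathfrak{su}(3)$ take $X=i\,\mathrm{diag}(1,1,-2)$, so that $\mathfrak{g}_X=\mathfrak{s}(\mathfrak{u}(2)\oplus\mathfrak{u}(1))$ with center $\mathbb{R}X$; the Weyl-conjugate $Y=i\,\mathrm{diag}(1,-2,1)\in\mathrm{Ad}(G)X$ is block-diagonal, hence lies in $\mathfrak{g}_X$, yet it is not a multiple of $X$ and therefore not central in $\mathfrak{g}_X$.

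The actual converse is immediate and needs neither structure theory nor the hypothesis $Y\in M$: if $[X,Y]=0$ then $Y\in\mathfrak{g}_X$ by the very definition of $\mathfrak{g}_X=\mathrm{Ker}(\mathrm{ad}(X))$, and every $Z\in\mathfrak{z}(\mathfrak{g}_X)$ commutes with every element of $\mathfrak{g}_X$, in particular with $Y$. So both directions are essentially tautological once the identifications $\psi_X(a)=\mathrm{Ad}(a)$ and $\mathrm{Lie}(\mathrm{Z}(G_X)_0)=\mathfrak{z}(\mathfrak{g}_X)$ are in place; what you labelled the ``substantive'' direction is in fact the easier one.
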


In particular, when $M=\mathrm{Ad}(G)X$ is a regular orbit, 
that is, when $M\cong G/T$ is a flag manifold,
the Lie algebra $\mathfrak{t} = \mathrm{Ker}(\mathrm{ad}(X))$ of $T$
is a maximal abelian subalgebra of $\mathfrak{g}$.
Thus, from \cite[Proposition 2.2 p.~285]{Helgason},
we have
\[
F(\psi_X(\Gamma),M) = M \cap \mathfrak{t} = W(\Delta)X,
\]
where $W(\Delta)$ is the Weyl group of the root system $\Delta$ of $\mathfrak{g}$
with respect to $\mathfrak{t}$.
Hence $F(\psi_X(\Gamma),M)$ is a finite set, namely, all polars are poles.

In general,
for a generalized flag manifold $M=\mathrm{Ad}(G)X$, we have the following proposition.

\begin{prop} \label{prop:polar_of_generalized_flag_manifold}
Let $\mathfrak{t}$ be a maximal abelian subalgebra of $\mathfrak{g}$ containing $X$.
Then
\[
F(\psi_X(\Gamma),M) = \mathrm{Ad}(G_X)\big(W(\Delta)X\big),
\]
where $W(\Delta)$ is the Weyl group of the root system $\Delta$ of $\mathfrak{g}$
with respect to $\mathfrak{t}$.
For any polar $M_\lambda$ of $M$ at $X$,
there exists $w \in W(\Delta)$ such that $M_\lambda = \mathrm{Ad}(G_X)(wX)$.
\end{prop}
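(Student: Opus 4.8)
The plan is to reduce everything to the known description of $F(\psi_X(\Gamma),M)$ from Proposition~\ref{prop:antipodal_condition_in_generalized_flag_manifold} and then invoke two standard conjugacy theorems for compact Lie algebras. By that proposition, $F(\psi_X(\Gamma),M) = M\cap \mathrm{Ker}(\mathrm{ad}(X)) = M\cap\mathfrak{g}_X$, where $\mathfrak{g}_X:=\mathrm{Ker}(\mathrm{ad}(X))$ is the Lie algebra of the centralizer $G_X$. First I would record two preliminary observations. Since $X$ generates a one-parameter subgroup whose closure $S$ is a torus, and $G_X$ is the centralizer of $S$ in the compact connected group $G$, the group $G_X$ is compact and connected. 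Also, since $X\in\mathfrak{t}$ and $\mathfrak{t}$ is abelian, $\mathfrak{t}\subseteq\mathfrak{g}_X$, and any abelian subalgebra of $\mathfrak{g}_X$ containing $\mathfrak{t}$ is abelian in $\mathfrak{g}$, hence equals $\mathfrak{t}$; thus $\mathfrak{t}$ is a maximal abelian (Cartan) subalgebra of $\mathfrak{g}_X$. The whole proposition then amounts to identifying $\mathrm{Ad}(G)X\cap\mathfrak{g}_X$ with $\mathrm{Ad}(G_X)(W(\Delta)X)$.

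For the inclusion $\mathrm{Ad}(G_X)(W(\Delta)X)\subseteq F(\psi_X(\Gamma),M)$, I would realize $W(\Delta)$ as $N_G(\mathfrak{t})/Z_G(\mathfrak{t})$, so that each $w\in W(\Delta)$ is represented by some $n\in N_G(\mathfrak{t})$ and $wX=\mathrm{Ad}(n)X\in\mathfrak{t}\cap M\subseteq\mathfrak{g}_X\cap M$. For $g\in G_X$ one has $\mathrm{Ad}(g)(wX)\in M$ trivially, and, using $\mathrm{Ad}(g)X=X$, $[X,\mathrm{Ad}(g)(wX)] = [\mathrm{Ad}(g)X,\mathrm{Ad}(g)(wX)] = \mathrm{Ad}(g)[X,wX] = 0$ because $X,wX\in\mathfrak{t}$; hence $\mathrm{Ad}(g)(wX)\in\mathfrak{g}_X\cap M = F(\psi_X(\Gamma),M)$.

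For the reverse inclusion, let $Y=\mathrm{Ad}(g)X\in M$ with $[X,Y]=0$, i.e.\ $Y\in\mathfrak{g}_X$. Since $\mathfrak{t}$ is a Cartan subalgebra of the compact Lie algebra $\mathfrak{g}_X$, every element of $\mathfrak{g}_X$ is $\mathrm{Ad}(G_X)$-conjugate into $\mathfrak{t}$, so I may choose $h\in G_X$ with $\mathrm{Ad}(h)Y\in\mathfrak{t}$. Now $\mathrm{Ad}(h)Y$ and $X$ both lie in $\mathfrak{t}$ and in the single adjoint orbit $M=\mathrm{Ad}(G)X$; by the standard fact that two elements of a Cartan subalgebra lying in one adjoint orbit are $W(\Delta)$-conjugate (e.g.\ \cite[Proposition~2.2, p.~285]{Helgason}, just as used above for the regular case), there is $w\in W(\Delta)$ with $\mathrm{Ad}(h)Y=wX$. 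Therefore $Y=\mathrm{Ad}(h^{-1})(wX)\in\mathrm{Ad}(G_X)(W(\Delta)X)$, completing the identification $F(\psi_X(\Gamma),M)=\mathrm{Ad}(G_X)(W(\Delta)X)$.

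For the statement on polars, I would note that each set $\mathrm{Ad}(G_X)(wX)$ is the continuous image of the compact connected group $G_X$, hence compact and connected, and that two such sets are either equal or disjoint (if $\mathrm{Ad}(g)(wX)=\mathrm{Ad}(g')(w'X)$ with $g,g'\in G_X$, then $w'X\in\mathrm{Ad}(G_X)(wX)$). Thus $F(\psi_X(\Gamma),M)$ is a finite disjoint union of these compact connected subsets, so each one is open and closed in $F(\psi_X(\Gamma),M)$ and is therefore a connected component; consequently every polar $M_\lambda$ at $X$ equals $\mathrm{Ad}(G_X)(wX)$ for some $w\in W(\Delta)$. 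The main points requiring care are the two structural claims—that $G_X$ is connected and that $\mathfrak{t}$ is genuinely a Cartan subalgebra of $\mathfrak{g}_X$—and citing the two conjugacy theorems in the correct form; the rest is routine bracket manipulation.
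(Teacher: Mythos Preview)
Your proof is correct and follows essentially the same route as the paper's: both use Proposition~\ref{prop:antipodal_condition_in_generalized_flag_manifold} to reduce to $M\cap\mathfrak{g}_X$, verify the easy inclusion by computing $[X,\mathrm{Ad}(g)(wX)]=0$, and for the reverse inclusion conjugate into $\mathfrak{t}$ inside $\mathfrak{g}_X$ via $\mathrm{Ad}(G_X)$ and then invoke \cite[Proposition~2.2, p.~285]{Helgason}. The only cosmetic difference is that the paper first extends $\{X,Y'\}$ to a maximal abelian $\mathfrak{t}'\subset\mathfrak{g}_X$ and conjugates $\mathfrak{t}'$ to $\mathfrak{t}$, whereas you conjugate $Y$ directly; for the connectedness of $G_X$ the paper simply cites \cite{Borel}, while you spell out the torus-centralizer argument.
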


\begin{proof}
From Proposition~\ref{prop:antipodal_condition_in_generalized_flag_manifold},
we have $W(\Delta)X = M \cap \mathfrak{t} \subset F(\psi_X (\Gamma),M)$.
For each $w \in W(\Delta)$, we can verify
$[X, \mathrm{Ad}(g)(wX)] = 0$
for all $g \in G_X$. Hence
$F(\psi_X(\Gamma),M) \supset \mathrm{Ad}(G_X)\big( W(\Delta)X \big)$.

Conversely,
for any $Y' \in F(\psi_X(\Gamma),M)$,
there exists a maximal abelian subalgebra $\mathfrak{t}'$ of $\mathfrak{g}$ containing $X$ and $Y'$,
since $[X,Y']=0$ by Proposition~\ref{prop:antipodal_condition_in_generalized_flag_manifold}.
Since $\mathfrak{t}$ and $\mathfrak{t}'$ are also maximal abelian subalgebras of
$\mathfrak{g}_X$,
there exists $g \in G_X \subset G$ such that
$\mathfrak{t}' = \mathrm{Ad}_{G_X}(g)\mathfrak{t} = \mathrm{Ad}_{G}(g)\mathfrak{t}$
by the uniqueness of the maximal abelian subalgebra of $\mathfrak{g}_X$
under the action of $\mathrm{Ad}_{G_X}(G_X)$.
Thus there exists $Y \in \mathfrak{t}$ such that $Y' = \mathrm{Ad}_G(g)Y$.
Then, from \cite[Proposition 2.2 p.~285]{Helgason}, we have
$Y \in M \cap \mathfrak{t} = W(\Delta)X$,
hence $Y' \in \mathrm{Ad}_G(G_X)\big( W(\Delta)X \big)$.
Therefore $F(\psi_X(\Gamma),M) \subset \mathrm{Ad}(G_X)\big( W(\Delta)X \big)$.
Consequently, we obtain
$F(\psi_X(\Gamma),M) = \mathrm{Ad}(G_X)\big( W(\Delta)X \big)$.

Since $G_X$ is connected (cf.\ \cite{Borel}) and $W(\Delta)X$ is a finite set,
we deduce that $\mathrm{Ad}(G_X)(wX)$ is a connected component of $F(\psi_X(\Gamma),M)$
for each $w \in W(\Delta)$.
\end{proof}

\begin{rem}
In Proposition~\ref{prop:polar_of_generalized_flag_manifold},
since $wX \in \mathfrak{t} \subset \mathfrak{g}_X$,
we have $M_\lambda = \mathrm{Ad}_G(G_X)(wX) = \mathrm{Ad}_{G_X}(G_X)(wX)$,
that is an adjoint orbit of the compact Lie group $G_X$.
Thus the polar $M_\lambda$ is a generalized flag manifold itself,
if it is not a pole.
\end{rem}

Proposition~\ref{prop:antipodal_condition_in_generalized_flag_manifold}
indicates that the antipodal condition for points in a generalized flag manifold $M$
is the commutativity as elements of the Lie algebra $\mathfrak{g}$.
Thus, elements of an antipodal set $A$ of $M$ span an abelian subalgebra $\mathrm{span}_{\mathbb{R}}(A)$ of $\mathfrak{g}$.
Therefore we conclude the following proposition.

\begin{prop}[\cite{IIOST}, Theorem~2.2]
Let $A$ be a maximal antipodal set of the generalized flag manifold $M=\mathrm{Ad}(G)X$.
Then there exists a maximal abelian subalgebra $\mathfrak{t}$ of $\mathfrak{g}$
such that $A = M \cap \mathfrak{t}$.
Hence $A$ is an orbit $W(\Delta)Y$
of the Weyl group $W(\Delta)$ of the root system $\Delta$ of $\mathfrak{g}$
with respect to $\mathfrak{t}$, where $Y \in A$.
In particular, all maximal antipodal sets of $M$ are congruent
to each other under the action of $G$.
\end{prop}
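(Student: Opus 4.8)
The plan is to reduce everything to Proposition~\ref{prop:antipodal_condition_in_generalized_flag_manifold}, which identifies the antipodal relation on $M=\mathrm{Ad}(G)X$ with commutativity in $\mathfrak{g}$: a subset $A\subset M$ is an antipodal set if and only if any two of its elements $Y,Y'$ satisfy $[Y,Y']=0$. Hence $\mathrm{span}_{\mathbb{R}}(A)$ is an abelian subalgebra of $\mathfrak{g}$, as already remarked above. First I would enlarge $\mathrm{span}_{\mathbb{R}}(A)$ to a maximal abelian subalgebra $\mathfrak{t}$ of $\mathfrak{g}$; since $G$ is compact and semisimple, $\mathfrak{t}$ is a Cartan subalgebra and any two such subalgebras are $\mathrm{Ad}(G)$-conjugate, which is the structural fact already used in the proof of Proposition~\ref{prop:polar_of_generalized_flag_manifold}. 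Note that a maximal antipodal set is nonempty (every singleton is antipodal), so I may choose $Y\in A$; then $Y\in M\cap\mathfrak{t}$, so this intersection is nonempty and $M=\mathrm{Ad}(G)X=\mathrm{Ad}(G)Y$.

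The second step is to show that this $\mathfrak{t}$ works, i.e.\ that $A=M\cap\mathfrak{t}$. By \cite[Proposition~2.2 p.~285]{Helgason} applied to $Y\in\mathfrak{t}$, the intersection $M\cap\mathfrak{t}=\mathrm{Ad}(G)Y\cap\mathfrak{t}$ is exactly the single Weyl-group orbit $W(\Delta)Y$, a finite subset of $\mathfrak{t}$. Since $\mathfrak{t}$ is abelian, the elements of $W(\Delta)Y$ pairwise commute, so $W(\Delta)Y$ is itself an antipodal set of $M$; it contains $A$, so maximality of $A$ forces $A=W(\Delta)Y=M\cap\mathfrak{t}$. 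This gives the first two assertions.

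For the congruence statement, let $A'$ be any other maximal antipodal set; by the same argument $A'=M\cap\mathfrak{t}'$ for some maximal abelian subalgebra $\mathfrak{t}'$. Choosing $g\in G$ with $\mathrm{Ad}(g)\mathfrak{t}=\mathfrak{t}'$ and using that $M$ is $\mathrm{Ad}(G)$-invariant, I get $\mathrm{Ad}(g)A=\mathrm{Ad}(g)(M\cap\mathfrak{t})=M\cap\mathfrak{t}'=A'$, and since $G$ acts on the generalized flag manifold $M$ through the adjoint representation, $A$ and $A'$ are congruent under $G$. The only step that requires genuine input is the appeal to Helgason's theorem to see that $M\cap\mathfrak{t}$ is a single Weyl orbit; granting that, together with the conjugacy of Cartan subalgebras, everything else is a formal consequence of Proposition~\ref{prop:antipodal_condition_in_generalized_flag_manifold}, so I do not anticipate a real obstacle.
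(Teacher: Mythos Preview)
Your argument is correct and follows exactly the line of reasoning the paper sketches in the paragraph immediately preceding the proposition: the antipodal condition is commutativity in $\mathfrak{g}$ (Proposition~\ref{prop:antipodal_condition_in_generalized_flag_manifold}), so $\mathrm{span}_{\mathbb{R}}(A)$ is abelian, and one extends to a maximal abelian subalgebra and invokes \cite[Proposition~2.2 p.~285]{Helgason}. The paper does not spell out the details (it simply cites \cite{IIOST}), but your proof is precisely the natural completion of that sketch, including the use of maximality to upgrade $A\subset M\cap\mathfrak{t}$ to equality and the conjugacy of Cartan subalgebras for the congruence statement.
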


\medskip
For a generalized $s$-manifold $(M, \Gamma, \{\varphi_x \}_{x \in M})$,
we define the notion of subspace similarly to the case of symmetric spaces. 

\begin{dfn}\label{dfn:subspace}
A submanifold $X$ of $M$ is said to be a \textit{subspace} of $M$
 if $\varphi_{x}(\gamma) (X)=X$ for any $x\in X, \gamma \in \Gamma$. 
\end{dfn}

\begin{prop} \label{prop:subspace}
Let $X$ be a subspace of $M$.
For each point $x \in X$ and element $\gamma \in \Gamma$, 
define $\widetilde\varphi_x(\gamma) := \varphi_x(\gamma)|_X$. 
Then $(X, \Gamma, \{\widetilde\varphi_{x}\}_{x \in X})$ is a generalized $s$-manifold. 
\end{prop}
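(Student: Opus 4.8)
The plan is to verify directly that the restricted data $(\Gamma, \{\widetilde\varphi_x\}_{x\in X})$ satisfies Definition~\ref{dfn:generalized s-manifold}. First I would observe that the subspace condition $\varphi_x(\gamma)(X)=X$ for all $x\in X$ and $\gamma\in\Gamma$ guarantees that each $\widetilde\varphi_x(\gamma)=\varphi_x(\gamma)|_X$ is a well-defined diffeomorphism of $X$, with inverse $\varphi_x(\gamma^{-1})|_X$ (which also preserves $X$). Consequently $\widetilde\varphi_x:\Gamma\to\mathrm{Diff}(X)$ is a group homomorphism: for $\gamma,\delta\in\Gamma$ both $\varphi_x(\gamma)$ and $\varphi_x(\delta)$ preserve $X$, so $(\varphi_x(\gamma)\circ\varphi_x(\delta))|_X=\varphi_x(\gamma)|_X\circ\varphi_x(\delta)|_X$, i.e.\ $\widetilde\varphi_x(\gamma\delta)=\widetilde\varphi_x(\gamma)\circ\widetilde\varphi_x(\delta)$.

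Next I would check the three conditions. For (1), the map $X\times X\to X;\ (x,y)\mapsto\widetilde\varphi_x(\gamma)(y)$ is the restriction to $X\times X$ of $\mu^\gamma:M\times M\to M$, whose image lies in $X$ by the subspace condition; since $X$ is a submanifold of $M$, this restriction is $C^\infty$ as a map into $X$, and likewise $\mu:\Gamma\times X\times X\to X$ is $C^\infty$ when $\Gamma$ is a Lie group. For (3), note that $\widetilde\varphi_x(\gamma)(y)=\varphi_x(\gamma)(y)$ lies in $X$ again by the subspace condition, and every transformation appearing in the identity of condition (3) for $M$ preserves $X$; hence restricting that identity to $X$ yields
$$
\widetilde\varphi_x(\gamma)\circ\widetilde\varphi_y(\delta)\circ\widetilde\varphi_x(\gamma)^{-1}
=\widetilde\varphi_{\widetilde\varphi_x(\gamma)(y)}(\gamma\delta\gamma^{-1})
$$
in $\mathrm{Diff}(X)$.

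The one point meriting attention is condition (2). Here the key observation is that
$$
F(\widetilde\varphi_x(\Gamma),X)=\{y\in X\mid \varphi_x(\gamma)(y)=y\ \text{for all}\ \gamma\in\Gamma\}=X\cap F(\varphi_x(\Gamma),M).
$$
Since $x$ is an isolated point of $F(\varphi_x(\Gamma),M)$ by condition (2) for $M$, it is a fortiori isolated in the smaller set $X\cap F(\varphi_x(\Gamma),M)$; thus $x$ is an isolated fixed point of the action of $\widetilde\varphi_x(\Gamma)$ on $X$. Combining the three verifications shows that $(X,\Gamma,\{\widetilde\varphi_x\}_{x\in X})$ is a generalized $s$-manifold.

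I do not expect a serious obstacle: the statement is essentially bookkeeping built on the defining property $\varphi_x(\gamma)(X)=X$ of a subspace. The only step requiring a little care is the smoothness claim in (1), where one invokes that for a submanifold $X$ a smooth $M$-valued map with image contained in $X$ is smooth as an $X$-valued map; everything else follows immediately by restriction.
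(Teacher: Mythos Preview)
Your proposal is correct and follows the same approach as the paper, namely direct verification of the three conditions of Definition~\ref{dfn:generalized s-manifold} by restriction; in fact your argument is considerably more detailed than the paper's own proof, which simply asserts that $\widetilde\varphi_x$ is a homomorphism and that conditions (1)--(3) are inherited from $M$.
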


\begin{proof}
Since $(M, \Gamma, \{\varphi_x \}_{x \in M})$ is a generalized $s$-manifold, 
$\widetilde\varphi_{x}: \Gamma \to \mathrm{Diff}(X)$ is a homomorphism for $x\in X$.
Then $(X, \Gamma, \{\widetilde\varphi_{x}\}_{x \in X})$ satisfies 
(1), (2) and (3) in Definition~\ref{dfn:generalized s-manifold}.
\end{proof}

\begin{rem}
Even if $(M, \Gamma, \{\varphi_x \}_{x \in M})$ is effective, 
its subspace $(X, \Gamma, \{\widetilde\varphi_{x}\}_{x \in X})$ is not necessarily effective.
\end{rem}

\begin{cor}
If $X$ is a subspace of $(M, \Gamma, \{\varphi_x\}_{x \in M})$, 
then from Proposition~\ref{prop:subspace}, 
$(X, \Gamma, \{\widetilde\varphi_x\}_{x\in X})$ is a generalized $s$-manifold. 
Then an antipodal set of $X$ is that of $M$, 
and $$\#_{\Gamma} (X) \leq \#_{\Gamma} (M)$$ holds.
\end{cor}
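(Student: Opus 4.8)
The plan is to prove the two assertions in sequence, as they are essentially immediate once the right identifications are made. For the first claim, that any antipodal set $A$ of $X$ is automatically an antipodal set of $M$: this follows directly from the definition together with Proposition~\ref{prop:subspace}. Indeed, take any $x, y \in A$ and $\gamma \in \Gamma$. Since $A$ is an antipodal set of $(X, \Gamma, \{\widetilde\varphi_x\}_{x \in X})$, we have $\widetilde\varphi_x(\gamma)(y) = y$. But $\widetilde\varphi_x(\gamma) = \varphi_x(\gamma)|_X$ by definition, so $\varphi_x(\gamma)(y) = \widetilde\varphi_x(\gamma)(y) = y$, which is exactly the condition for $A$ to be an antipodal set of $(M, \Gamma, \{\varphi_x\}_{x \in M})$. (Implicitly we use that $A \subset X \subset M$, so the points of $A$ do make sense as points of $M$.)

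For the inequality $\#_\Gamma(X) \le \#_\Gamma(M)$, I would argue as follows. By definition, $\#_\Gamma(X) = \sup\{\#(A) \mid A \text{ is an antipodal set of } X\}$ and $\#_\Gamma(M) = \sup\{\#(B) \mid B \text{ is an antipodal set of } M\}$. By the first part, the collection of antipodal sets of $X$ is contained in the collection of antipodal sets of $M$, hence the supremum over the smaller collection is at most the supremum over the larger one. This also handles the possibly-infinite case gracefully, since suprema of cardinalities (in the extended sense, allowing $\infty$) are monotone under inclusion of the index family.

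I do not anticipate any genuine obstacle here; the statement is a formal consequence of Proposition~\ref{prop:subspace} and the definitions of antipodal set and antipodal number. The only point requiring a moment's care is the set-theoretic one that an antipodal set of $X$ really is a subset of $M$ and that the antipodal condition transfers verbatim under restriction of the symmetric transformations — both of which are built into the definition of a subspace and of $\widetilde\varphi_x$. Accordingly I would keep the proof to a few lines, citing Proposition~\ref{prop:subspace} for the well-definedness of the generalized $s$-structure on $X$ and then spelling out the one-line verification above.
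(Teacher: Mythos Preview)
Your proof is correct and follows essentially the same approach as the paper's own proof: both verify directly that $\widetilde\varphi_x(\gamma)(y)=\varphi_x(\gamma)(y)$ for $x,y$ in an antipodal set $A$ of $X$, conclude that $A$ is an antipodal set of $M$, and then deduce the inequality $\#_\Gamma(X)\le\#_\Gamma(M)$ from the inclusion of the corresponding families of antipodal sets. Your write-up is slightly more explicit about the supremum step, but there is no substantive difference.
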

\begin{proof}
If $A$ is an antipodal set of a subspace $X$, 
then for any $x, y\in A, \gamma \in \Gamma$, 
$$
y=\widetilde{\varphi}_{x}(\gamma)(y)=\varphi_{x}(\gamma)(y),
$$
and $A$ is also an antipodal set of $M$. 
Therefore, we obtain 
$$
\#_{\Gamma} (X) \leq \#_{\Gamma} (M).
$$
\end{proof}

In a generalized $s$-manifold, 
we explain a sufficient condition for polars and $\gamma$-polars to be subspaces.
In general, a connected component of the fixed point set of a diffeomorphism is not necessarily a submanifold.
When a generalized $s$-manifold $M$ has a Riemannian metric and 
symmetric transformations are isometries,
we can see  that polars and $\gamma$-polars are submanifolds of $M$ from the following lemma.

\begin{lem}[\cite{Kobayashi}, Ch.~2 Theorem~5.1  in p.~69]\label{Thm:kobayashi}
Let $M$ be a Riemannian manifold and $G$ any set of isometries of $M$.  
Let $F$ be the set of points of $M$ which are left fixed by all elements of $G$.  
Then each connected component of $F$ is a closed totally geodesic submanifold of $M$.
\end{lem}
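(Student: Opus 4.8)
The plan is to give the standard local normal form for fixed-point sets of isometries: near any fixed point, $F$ is the image under the Riemannian exponential map of the subspace of vectors fixed by the linear isotropy action, which is visibly an embedded submanifold, and geodesics tangent to it remain inside it. First I would fix $p\in F$ and consider, for each $g\in G$, the differential $(dg)_p$, which is a linear isometry of $(T_pM,g_p)$ since $g$ is an isometry with $g(p)=p$. Set
$$
V_p := \{\, v\in T_pM \mid (dg)_p v = v \ \text{ for all } g\in G \,\},
$$
a linear subspace of $T_pM$. Choose a geodesically convex normal neighborhood $U$ of $p$ (Whitehead's theorem), so that $\exp_p$ restricts to a diffeomorphism from a ball $B\subset T_pM$ onto $U$ and any two points of $U$ are joined by a unique minimizing geodesic lying in $U$.

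The crucial step is to show $F\cap U = \exp_p(V_p\cap B)$. For the inclusion $\supseteq$, if $v\in V_p\cap B$ then for every $g\in G$ the naturality of the exponential map under isometries gives $g(\exp_p v)=\exp_{g(p)}((dg)_p v)=\exp_p v$, so $\exp_p v\in F$. For $\subseteq$, let $q\in F\cap U$ and let $\gamma(t)=\exp_p(tv)$, $v=\gamma'(0)\in B$, be the unique minimizing geodesic in $U$ from $p$ to $q$. For each $g\in G$, the curve $g\circ\gamma$ is again a minimizing geodesic in $U$ from $g(p)=p$ to $g(q)=q$ of the same length, so by uniqueness $g\circ\gamma=\gamma$; differentiating at $t=0$ yields $(dg)_p v=v$. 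Hence $v\in V_p\cap B$ and $q\in\exp_p(V_p\cap B)$. Consequently $F\cap U$ is the diffeomorphic image of the open subset $V_p\cap B$ of the linear subspace $V_p$, hence an embedded submanifold of $U$; as $p$ was arbitrary, $F$ is a submanifold of $M$. Moreover, applying the same description at a point $q\in F\cap U$ forces $\dim V_q=\dim V_p$, so the local dimension is constant along each connected component.

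It remains to verify that $F$ is totally geodesic and that its components are closed. Total geodesy is immediate from the inclusion $\supseteq$ above applied at an arbitrary point $q\in F$: for $v\in T_qF=V_q$ we have $\exp_q(tv)\in F$ for all small $t$, so every geodesic of $M$ tangent to $F$ stays in $F$. For closedness, each $F(g,M)=\{x\in M\mid g(x)=x\}$ is closed, being the fixed-point set of a continuous map, so $F=\bigcap_{g\in G}F(g,M)$ is closed in $M$; since a connected component of any closed subset of a topological space is itself closed, each connected component of $F$ is a closed, totally geodesic submanifold of $M$.

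The main obstacle is the local identity $F\cap U=\exp_p(V_p\cap B)$: it is essential to work inside a geodesically convex neighborhood so that uniqueness of minimizing geodesics can be invoked both for $\gamma$ itself and for each of its images $g\circ\gamma$, and one must note the constant-dimension point along components before asserting "submanifold'' in the strong sense. The remaining ingredients — naturality of $\exp_p$ under isometries, and closedness of fixed-point sets of continuous maps — are entirely routine.
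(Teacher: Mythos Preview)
Your argument is correct and is essentially the standard proof given in Kobayashi's book, which the paper simply cites without reproducing. The paper itself offers no proof of this lemma; it merely invokes the reference \cite{Kobayashi}. One small point worth tightening: when you assert that $g\circ\gamma$ is ``a minimizing geodesic in $U$'', you should justify that it actually lies in $U$. This follows immediately from the naturality you already used, since $g\circ\gamma(t)=\exp_p\big(t\,(dg)_p v\big)$ and $(dg)_p$ is a linear isometry, so $(dg)_p v\in B$ and hence $g\circ\gamma(t)\in\exp_p(B)\subset U$; equivalently, injectivity of $\exp_p|_B$ alone already forces $(dg)_p v=v$ without appealing to convexity. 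With that clarification your proof is complete.
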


If the action of symmetric transformation group $\varphi_{x}(\Gamma)$ at a point $x$ in $M$ is proper, 
then $M$ admits a Riemannian metric which is invariant under the action of  $\varphi_{x}(\Gamma)$ by the following lemma.

\begin{lem}[\cite{Koszul}, Ch.~1 Theorem~2  in p.~9]\label{Thm:koszul}
Let $G$ be a Lie group acting properly and differentiably on a paracompact differentiable manifold $X$. 
Then $X$ admits a Riemannian metric invariant under $G$.
\end{lem}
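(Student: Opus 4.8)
The plan is to produce the invariant metric by an averaging argument. Since $X$ is a paracompact manifold it carries \emph{some} Riemannian metric $h$ (patch Euclidean metrics on charts by an ordinary partition of unity); the $G$-invariant metric will be obtained by integrating the pullbacks of $h$ over $G$ against a left Haar measure $\mu$, after inserting a cutoff function so that the integral converges.

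The key preparatory step — and the place where properness of the action is genuinely used — is the construction of a \emph{cutoff function}: a smooth $f:X\to[0,\infty)$ which is not identically zero on any orbit and for which, for every $x\in X$, the set $\{g\in G \mid g^{-1}\!\cdot x\in\operatorname{supp} f\}$ is relatively compact in $G$. To build $f$ I would pass to the orbit space: properness implies that $X/G$ is a locally compact paracompact Hausdorff space and that $\pi:X\to X/G$ is open, so one may choose a locally finite cover $\{W_\alpha\}$ of $X/G$ by relatively compact open sets with a subordinate partition of unity $\{\chi_\alpha\}$. For each $\alpha$, using local compactness and properness, select a relatively compact open set $U_\alpha\subset\pi^{-1}(W_\alpha)$ with $\pi(U_\alpha)\supset\operatorname{supp}\chi_\alpha$ and a function $\phi_\alpha\in C^\infty_c(X)$ with $\phi_\alpha\geq 0$, $\operatorname{supp}\phi_\alpha\subset U_\alpha$, which is strictly positive at some point of every orbit meeting $\pi^{-1}(\operatorname{supp}\chi_\alpha)$. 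Then $f:=\sum_\alpha\phi_\alpha$ is a locally finite sum, hence smooth and nonnegative, it is positive somewhere on each orbit (because the $\chi_\alpha$ sum to $1$), and its support meets each orbit in a relatively compact set; since for a proper action each isotropy group is compact and hence the orbit map $G\to G\cdot x$ is proper, this yields the desired relative compactness of $\{g\mid g^{-1}x\in\operatorname{supp} f\}$.

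With $f$ at hand I would set
\[
\tilde g_x(u,v) := \int_G f(g^{-1}\!\cdot x)\, h_{g^{-1}\cdot x}\!\bigl((dg^{-1})_x u,\ (dg^{-1})_x v\bigr)\, d\mu(g), \qquad u,v\in T_xX,
\]
where $g^{-1}$ denotes the diffeomorphism $x\mapsto g^{-1}\!\cdot x$. As a function of $g$ the integrand is supported on the relatively compact set $\{g\mid g^{-1}x\in\operatorname{supp} f\}$, so the integral converges, and smoothness of $\tilde g$ follows by differentiating under the integral sign over a fixed compact range. Positive-definiteness is immediate: the integrand is nonnegative (as $f\geq 0$ and $h$ is positive-definite) and strictly positive for $g$ in the nonempty open set where $f(g^{-1}x)>0$. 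Finally, $G$-invariance is the substitution $g=ah$ in the integral defining $\tilde g_{a\cdot x}\bigl((da)_x u,(da)_x v\bigr)$: the chain rule gives $(dg^{-1})_{a\cdot x}\circ(da)_x=(dh^{-1})_x$, and left-invariance of $\mu$ gives $d\mu(ah)=d\mu(h)$, so the integral is unchanged and $\tilde g_{a\cdot x}\bigl((da)_x u,(da)_x v\bigr)=\tilde g_x(u,v)$.

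Thus the whole difficulty is concentrated in the cutoff-function step; once that function is available, the remaining verifications (convergence, smoothness, positive-definiteness, invariance) are the routine averaging computation sketched above.
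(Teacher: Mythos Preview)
The paper does not give its own proof of this lemma; it is quoted verbatim from Koszul's lecture notes and used as a black box. Your averaging argument via a cutoff function is correct and is precisely the approach in Koszul: choose an arbitrary metric, manufacture a smooth nonnegative function whose orbitwise support pulls back to a relatively compact set in $G$, and integrate the pullback metrics against a left Haar measure. The verifications you outline (convergence from the compact range, smoothness by differentiation under the integral, positive-definiteness from $f$ being positive on an open set of each orbit, invariance via the substitution $g\mapsto ah$ and left invariance of $\mu$) are the standard ones.

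The only place your sketch is a little thin is the existence of the relatively compact $U_\alpha\subset X$ with $\pi(U_\alpha)\supset\operatorname{supp}\chi_\alpha$: you should note explicitly that since $\pi$ is open and $X$ is locally compact, any compact subset of $X/G$ is contained in the image of a relatively compact open subset of $X$ (cover by finitely many $\pi(V_i)$ with $V_i$ relatively compact). With that one sentence added, the cutoff construction goes through, and the rest is exactly as you wrote.
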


Therefore, we have the following proposition.

\begin{prop}
Let $\Gamma $ be a Lie group acting properly and differentiably on a paracompact differentiable manifold $M$.
Then each connected component of the fixed point set of the action of $\Gamma$ is  a submanifold of $M$.
\end{prop}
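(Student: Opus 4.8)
The plan is to derive this proposition by simply chaining together the two lemmas that immediately precede it. Given a Lie group $\Gamma$ acting properly and differentiably on a paracompact differentiable manifold $M$, first I would invoke Lemma~\ref{Thm:koszul} (Koszul) to obtain a Riemannian metric $g$ on $M$ that is invariant under the action of $\Gamma$; properness and differentiability of the action are precisely the hypotheses of that lemma, and paracompactness of $M$ is assumed, so this step is immediate. With such a metric fixed, every element of $\Gamma$ acts on $(M,g)$ as an isometry, so $\Gamma$ (more precisely, its image in the diffeomorphism group) is a set of isometries of the Riemannian manifold $(M,g)$.

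Next I would apply Lemma~\ref{Thm:kobayashi} (Kobayashi) to this set of isometries. That lemma says that the common fixed-point set $F$ of any collection of isometries of a Riemannian manifold has the property that each of its connected components is a closed totally geodesic submanifold. Since the fixed-point set of the $\Gamma$-action is exactly $F(\Gamma,M) = \{y \in M \mid \gamma \cdot y = y \text{ for all } \gamma \in \Gamma\}$, which is the common fixed-point set of the isometries coming from $\Gamma$, each connected component of it is a (closed, totally geodesic) submanifold of $M$. In particular it is a submanifold, which is the assertion. One small point worth stating explicitly: the conclusion is independent of the choice of invariant metric, since being a submanifold is a smooth-topological property, not a metric one; the metric is only an auxiliary device to bring Kobayashi's lemma to bear.

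There is essentially no obstacle here — the proposition is a formal consequence of the two quoted lemmas, and the only thing to check is that the hypotheses match up, which they do verbatim. If I wanted to be careful I would note that Koszul's theorem as quoted requires a \emph{Lie group} acting properly, which is exactly the hypothesis, whereas Kobayashi's theorem needs only an arbitrary \emph{set} of isometries, so there is no mismatch in generality. The proof is therefore just:

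\begin{proof}
By Lemma~\ref{Thm:koszul}, the manifold $M$ admits a Riemannian metric which is invariant under the proper action of the Lie group $\Gamma$. With respect to this metric, every element of $\Gamma$ acts as an isometry of $M$, so $\Gamma$ may be regarded as a set of isometries of $M$ in the sense of Lemma~\ref{Thm:kobayashi}. The fixed point set of the action of $\Gamma$ is precisely the set of points of $M$ fixed by all these isometries. Hence, by Lemma~\ref{Thm:kobayashi}, each connected component of this fixed point set is a closed totally geodesic submanifold of $M$; in particular it is a submanifold of $M$.
\end{proof}
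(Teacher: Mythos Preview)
Your proof is correct and follows exactly the approach the paper intends: the proposition is stated in the paper without an explicit proof, immediately after Lemmas~\ref{Thm:kobayashi} and~\ref{Thm:koszul}, with the phrase ``Therefore, we have the following proposition,'' so the paper's argument is precisely the chaining of Koszul's lemma (to obtain an invariant metric) with Kobayashi's lemma (to conclude the fixed point set has submanifold components). You have simply made this explicit.
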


Using the above proposition,
we can show the following lemma for a generalized $s$-manifold $(M, \Gamma, \{\varphi_x \}_{x \in M} )$. 
Hereafter we suppose that $M$ is paracompact.

\begin{lem} \label{lem:polar-submfd}
Let $\Gamma$ be a Lie group or discrete group.
Suppose that the action of the symmetric transformation group $\varphi_{x} (\Gamma)$
at a point $x$ in $M$ is proper.
\begin{enumerate}
\item Each $\gamma$-polar $M_{\lambda}^{\gamma}$ at $x$ is a submanifold of $M$ for any $\gamma \in \Gamma$.
\item Each polar $M_{\lambda}$ at $x$ is a submanifold of $M$.
\end{enumerate}

\end{lem}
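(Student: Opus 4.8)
The plan is to derive the statement directly from the three lemmas and the proposition that immediately precede it. The key observation is that a $\gamma$-polar $M_\lambda^\gamma$ at $x$ is by definition a connected component of $F(\varphi_x(\gamma),M)$, and a polar $M_\lambda$ at $x$ is a connected component of $F(\varphi_x(\Gamma),M)$; in both cases we are looking at a connected component of a fixed point set of a family of diffeomorphisms (a single diffeomorphism $\varphi_x(\gamma)$ for (1), the whole group $\varphi_x(\Gamma)$ for (2)). So the whole point is to put ourselves in a situation where we may apply Lemma~\ref{Thm:kobayashi}, which requires that the maps in question be isometries of some Riemannian metric on $M$.

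First I would handle the case that $\Gamma$ (hence $\varphi_x(\Gamma)$) is a Lie group acting properly: by Lemma~\ref{Thm:koszul}, applied to the group $G = \varphi_x(\Gamma) \subset \mathrm{Diff}(M)$ acting on the paracompact manifold $M$, there is a Riemannian metric $g$ on $M$ invariant under all of $\varphi_x(\Gamma)$. With respect to this $g$, every $\varphi_x(\gamma)$ is an isometry, so in particular the cyclic group $\langle \varphi_x(\gamma)\rangle$ and the whole $\varphi_x(\Gamma)$ consist of isometries. Now Lemma~\ref{Thm:kobayashi} applies: taking $G$ there to be $\{\varphi_x(\gamma)\}$ gives that each connected component of $F(\varphi_x(\gamma),M)$ — i.e. each $\gamma$-polar — is a closed totally geodesic submanifold, proving (1); taking $G$ there to be $\varphi_x(\Gamma)$ gives that each connected component of $F(\varphi_x(\Gamma),M)$ — i.e. each polar — is a closed totally geodesic submanifold, proving (2).

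For the case that $\Gamma$ is a discrete group, properness of the action of $\varphi_x(\Gamma)$ together with discreteness means the action is properly discontinuous; here I would note that Lemma~\ref{Thm:koszul} still applies (a discrete group is a $0$-dimensional Lie group acting properly and differentiably), so again we obtain an invariant Riemannian metric and conclude exactly as above. Alternatively one can invoke the preceding Proposition directly: it already packages ``$\Gamma$ a Lie group acting properly $\Rightarrow$ connected components of the fixed set are submanifolds,'' and since $\varphi_x(\Gamma)$ is a subgroup of $\mathrm{Diff}(M)$ acting properly by hypothesis, we apply that Proposition to the action of $\varphi_x(\Gamma)$ for (2), and to the action of the subgroup $\langle\varphi_x(\gamma)\rangle$ for (1).

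The only point requiring care — and the one I expect to be the main obstacle — is the bookkeeping about which group is acting: Lemma~\ref{Thm:koszul} and the Proposition speak of an abstract group acting on $M$, whereas here the relevant acting group is the image $\varphi_x(\Gamma) \subset \mathrm{Diff}(M)$, not $\Gamma$ itself (and $\varphi_x$ need not be injective). One must check that properness of the action of $\varphi_x(\Gamma)$ as given in the hypothesis is exactly what feeds into these lemmas, and that $\varphi_x(\Gamma)$ inherits a Lie group (or discrete group) structure making the action differentiable — for a Lie group $\Gamma$ with the $C^\infty$ map $\mu$ of Definition~\ref{dfn:generalized s-manifold}(1), the map $\Gamma\times M\to M$, $(\gamma,y)\mapsto\varphi_x(\gamma)(y)$ is smooth, so it descends to a smooth proper action of the quotient $\varphi_x(\Gamma)$. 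Once this identification is made explicit, the rest is a direct citation of the two lemmas, and there are no further calculations to perform.
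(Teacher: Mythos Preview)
Your proposal is correct and follows the same route as the paper: the paper gives no explicit proof of this lemma, merely stating that it follows ``using the above proposition'' (i.e., that connected components of the fixed set of a properly acting Lie group are submanifolds), and you have unpacked precisely that argument via Lemma~\ref{Thm:koszul} and Lemma~\ref{Thm:kobayashi}. Your first route---obtain a $\varphi_x(\Gamma)$-invariant Riemannian metric from properness, then apply Kobayashi's fixed-point lemma to the single isometry $\varphi_x(\gamma)$ for~(1) and to the whole group for~(2)---is the clean way to do it; your alternative of applying the Proposition directly to the cyclic subgroup $\langle\varphi_x(\gamma)\rangle$ for~(1) would require knowing that this subgroup is closed in $\varphi_x(\Gamma)$ (so that it inherits properness), which need not hold in general, so stick with the first route.
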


\begin{prop}\label{prop:polar-subsp}
Let $\Gamma$ be an abelian Lie group or abelian discrete group.
Suppose that the action of the symmetric transformation group $\varphi_{x} (\Gamma)$
at a point $x$ in $M$ is proper.
\begin{enumerate}
\item Each $\gamma$-polar $M^{\gamma}_{\lambda}$ at $x$ is a subspace of $M$ for any $\gamma \in \Gamma$.
\item Each polar $M_{\lambda}$ at $x$ is a subspace of $M$.
\end{enumerate}

\end{prop}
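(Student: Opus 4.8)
The plan is to reduce Proposition~\ref{prop:polar-subsp} to Lemma~\ref{lem:polar-submfd} together with a direct verification of the invariance condition in Definition~\ref{dfn:subspace}. By Lemma~\ref{lem:polar-submfd}, each $\gamma$-polar $M^{\gamma}_{\lambda}$ and each polar $M_{\lambda}$ at $x$ is already a submanifold of $M$, so the only thing left is to show that $\varphi_{y}(\delta)$ preserves the relevant connected component for every $y$ in the polar and every $\delta \in \Gamma$. First I would fix a $\gamma$-polar $M^{\gamma}_{\lambda} \subset F(\varphi_{x}(\gamma), M)$ and a point $y \in M^{\gamma}_{\lambda}$, and observe that since $y \in F(\varphi_{x}(\gamma), M)$ we have $\varphi_{x}(\gamma)(y) = y$; then the condition~(3) of Definition~\ref{dfn:generalized s-manifold} gives
\[
\varphi_{x}(\gamma) \circ \varphi_{y}(\delta) \circ \varphi_{x}(\gamma)^{-1}
= \varphi_{\varphi_{x}(\gamma)(y)}(\gamma\delta\gamma^{-1})
= \varphi_{y}(\gamma\delta\gamma^{-1}).
\]
Because $\Gamma$ is abelian, $\gamma\delta\gamma^{-1} = \delta$, so this reads $\varphi_{x}(\gamma) \circ \varphi_{y}(\delta) = \varphi_{y}(\delta) \circ \varphi_{x}(\gamma)$, i.e.\ $\varphi_{y}(\delta)$ commutes with $\varphi_{x}(\gamma)$.

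Next I would use this commutation to see that $\varphi_{y}(\delta)$ maps $F(\varphi_{x}(\gamma), M)$ into itself: if $z \in F(\varphi_{x}(\gamma), M)$, then $\varphi_{x}(\gamma)(\varphi_{y}(\delta)(z)) = \varphi_{y}(\delta)(\varphi_{x}(\gamma)(z)) = \varphi_{y}(\delta)(z)$, so $\varphi_{y}(\delta)(z) \in F(\varphi_{x}(\gamma), M)$. Since $\varphi_{y}(\delta)$ is a diffeomorphism, it permutes the connected components of $F(\varphi_{x}(\gamma), M)$; and since $\varphi_{y}(\delta)$ fixes $y$ (by Definition~\ref{dfn:generalized s-manifold}, $y$ is a fixed point of $\varphi_{y}(\Gamma)$) and $y \in M^{\gamma}_{\lambda}$, it must preserve the component $M^{\gamma}_{\lambda}$ containing $y$. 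This is exactly the condition $\varphi_{y}(\delta)(M^{\gamma}_{\lambda}) = M^{\gamma}_{\lambda}$ for all $y \in M^{\gamma}_{\lambda}$, $\delta \in \Gamma$, so part~(1) follows. For part~(2) the argument is identical with $F(\varphi_{x}(\gamma), M)$ replaced by $F(\varphi_{x}(\Gamma), M) = \bigcap_{\gamma \in \Gamma} F(\varphi_{x}(\gamma), M)$: for $y$ in a polar $M_{\lambda}$ and $\delta \in \Gamma$, the abelian hypothesis makes $\varphi_{y}(\delta)$ commute with every $\varphi_{x}(\gamma)$, hence $\varphi_{y}(\delta)$ preserves the full fixed-point set and, fixing $y$, preserves the component $M_{\lambda}$.

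The main (and essentially only) subtlety is the abelian hypothesis: it is what collapses $\gamma\delta\gamma^{-1}$ to $\delta$ and thereby turns condition~(3) into genuine commutativity of $\varphi_{y}(\delta)$ with the transformation(s) defining the polar. Without it one would only get $\varphi_{x}(\gamma) \circ \varphi_{y}(\delta) \circ \varphi_{x}(\gamma)^{-1} = \varphi_{y}(\gamma\delta\gamma^{-1})$, which moves $F(\varphi_{y}(\delta), \cdot)$-type data around rather than preserving a fixed component, so the polar need not be a subspace in the non-abelian case. I would also note in passing that the properness hypothesis and Lemma~\ref{lem:polar-submfd} are used only to guarantee the submanifold structure; the invariance half of the statement uses nothing beyond condition~(3) of Definition~\ref{dfn:generalized s-manifold} and commutativity of $\Gamma$.
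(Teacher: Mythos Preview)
Your proposal is correct and follows essentially the same approach as the paper's proof: both start from Lemma~\ref{lem:polar-submfd} for the submanifold structure, then use condition~(3) of Definition~\ref{dfn:generalized s-manifold} together with $\varphi_x(\gamma)(y)=y$ and the abelian hypothesis to get $\varphi_x(\gamma)\circ\varphi_y(\delta)=\varphi_y(\delta)\circ\varphi_x(\gamma)$, conclude that $\varphi_y(\delta)$ preserves the fixed-point set, and then use that $\varphi_y(\delta)$ fixes $y$ to pin down the connected component. The only cosmetic difference is that the paper argues $\varphi_y(\delta)(M^{\gamma}_{\lambda})\subset M^{\gamma}_{\lambda}$ and then repeats for $\delta^{-1}$ to get equality, whereas you phrase this as ``a diffeomorphism permuting components and fixing $y$''; these are the same idea.
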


\begin{proof}
(1) By Lemma~\ref{lem:polar-submfd} (1), $M^{\gamma}_{\lambda}$ is a submanifold.
We show that  
$\varphi_{y}(\delta) (M^{\gamma}_{\lambda}) = M^{\gamma}_{\lambda}$
holds for every $y \in M^{\gamma}_{\lambda}$ and $\delta \in \Gamma$.
Since
$$
\varphi_{x}(\gamma) \circ \varphi_{y}(\delta) \circ \varphi_{x}(\gamma)^{-1}
=\varphi_{\varphi_{x}(\gamma)(y)}(\gamma \delta \gamma^{-1})
=\varphi_{y}(\delta),
$$
we have 
$$
\varphi_{x}(\gamma) \circ \varphi_{y}(\delta)(z)
=\varphi_{y}(\delta) \circ \varphi_{x}(\gamma)(z)
=\varphi_{y}(\delta)(z)
$$
for each $z \in  M^{\gamma}_{\lambda}$. 
Hence, 
$\varphi_{y}(\delta)(z) \in F(\varphi_{x}(\gamma), M)$. 
Thus, we have
$\varphi_{y}(\delta)(M^{\gamma}_{\lambda}) \subset F(\varphi_{x}(\gamma), M)$
and  
$\varphi_{y}(\delta)(M^{\gamma}_{\lambda}) \subset M^{\gamma}_{\lambda'}$
for some $\lambda'$. 
Then, since $y \in M^{\gamma}_{\lambda} \cap \varphi_{y}(\delta)(M^{\gamma}_{\lambda})$,
we can see that
$\varphi_{y}(\delta)(M^{\gamma}_{\lambda}) \subset M^{\gamma}_{\lambda}$.
In the same way, it can be shown that
$\varphi_{y}(\delta^{-1})(M^{\gamma}_{\lambda}) \subset M^{\gamma}_{\lambda}$.
Therefore, we obtain
$\varphi_{y}(\delta)(M^{\gamma}_{\lambda}) = M^{\gamma}_{\lambda}$.

The proof of (2) is almost the same as that of (1).
By Lemma~\ref{lem:polar-submfd} (2), $M_{\lambda}$ is a submanifold.
We show $\varphi_{y}(\delta) (M_{\lambda}) = M_{\lambda}$
for each $y \in M_{\lambda}$ and $\delta \in \Gamma$. 
Since 
$$
\varphi_{x}(\gamma) \circ \varphi_{y}(\delta) \circ \varphi_{x}(\gamma)^{-1}
=\varphi_{\varphi_{x}(\gamma)(y)}(\gamma \delta \gamma^{-1})
=\varphi_{y}(\delta)
\qquad (\forall \gamma \in \Gamma),
$$
we have 
$$
\varphi_{x}(\gamma) \circ \varphi_{y}(\delta)(z)
=\varphi_{y}(\delta) \circ \varphi_{x}(\gamma)(z)
=\varphi_{y}(\delta)(z)
\qquad (\forall \gamma \in \Gamma)
$$
for each $z \in M_{\lambda}$.
Thus, 
$\varphi_{y}(\delta)(z) \in F(\varphi_{x}(\Gamma), M)$. 
Then we have 
$\varphi_{y}(\delta)(M_{\lambda}) \subset F(\varphi_{x}(\Gamma), M)$
and 
$\varphi_{y}(\delta)(M_{\lambda}) \subset M_{\lambda'}$
for some $\lambda'$. 
Since $y \in M_{\lambda} \cap \varphi_{y}(\delta)(M_{\lambda})$, 
we can see $\varphi_{y}(\delta)(M_{\lambda}) \subset M_{\lambda}$.
In the same way, we have $\varphi_{y}(\delta^{-1})(M_{\lambda}) \subset M_{\lambda}$.
Therefore, we obtain $\varphi_{y}(\delta)(M_{\lambda}) = M_{\lambda}$.
\end{proof}

The following lemma is clear from the definition of antipodal set.
\begin{lem} \label{lem:antipodal subset fixed point set}
Let $A$ be an antipodal set of $M$ and $x \in A$.
Then the following inclusion relations hold.
\begin{enumerate}
\item $A \subset F \big( \varphi_x(\gamma), M \big) \quad (\forall \gamma \in \Gamma)$,
\item $A \subset F \big( \varphi_x(\Gamma), M \big)$.
\end{enumerate}
\end{lem}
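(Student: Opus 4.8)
The plan is simply to unwind the definition of an antipodal set, so there is essentially no obstacle. First I would fix $x\in A$ and an arbitrary $\gamma\in\Gamma$, and let $y\in A$ be arbitrary. By the defining property of an antipodal set — which requires $\varphi_x(\gamma)(y)=y$ for \emph{every} ordered pair of points of $A$ and every element of $\Gamma$ — we get $\varphi_x(\gamma)(y)=y$, i.e.\ $y\in F(\varphi_x(\gamma),M)$. Since $y$ was an arbitrary point of $A$, this yields $A\subset F(\varphi_x(\gamma),M)$, and since $\gamma$ was arbitrary this is precisely assertion~(1).

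For assertion~(2) I would intersect over $\gamma$: from~(1), $A\subset F(\varphi_x(\gamma),M)$ for every $\gamma\in\Gamma$, hence $A\subset\bigcap_{\gamma\in\Gamma}F(\varphi_x(\gamma),M)=F(\varphi_x(\Gamma),M)$, the last equality being the definition of the fixed-point set of the subgroup $\varphi_x(\Gamma)$. Equivalently, (2) is literally the ``that is'' reformulation appearing in the definition of an antipodal set, applied to each $y\in A$, so it can be read off without any further argument.

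Since both parts are immediate consequences of the definitions, the only thing demanding a little care is the bookkeeping of the quantifiers (making sure the defining condition is invoked for the correct ordered pair and for arbitrary $\gamma$); I anticipate no genuine difficulty, and the lemma is recorded here mainly because it is the elementary observation underlying the later estimates on the antipodal number.
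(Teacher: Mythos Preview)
Your proposal is correct and is exactly the argument the paper has in mind: the paper gives no written proof, stating only that the lemma ``is clear from the definition of antipodal set,'' and your unwinding of the definition is precisely that verification.
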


Finally, we have the following inequalities for the antipodal number of a generalized $s$-manifold.

\begin{thm}\label{thm:inequality of antipodal number}
Let $\Gamma$ be an abelian Lie group or abelian discrete group.
Let $A$ be a great antipodal set of $(M, \Gamma, \{\varphi_x\}_{x\in M})$.
Suppose that the action of the symmetric transformation group $\varphi_{x} (\Gamma)$
at a point $x$ in $A$ is proper.
\begin{enumerate}
\item 
For an element $\gamma \in \Gamma$,
if $F(\varphi_{x}(\gamma), M)$ is the disjoint union $F(\varphi_{x}(\gamma), M) = \bigcup_{i=0}^{r}M_{i}^{\gamma}$ 
of a finite number of $\gamma$-polars at $x$,
then the following holds:
$$ 
\#_{\Gamma}(M) \leq \sum_{i=0}^{r} \#_{\Gamma}(M_{i}^{\gamma}). 
$$ 
\item
If $F(\varphi_{x}(\Gamma), M)$ is the disjoint union 
$F(\varphi_{x}(\Gamma), M) = \bigcup_{i=0}^{r}M_{i}$
of a finite number of polars at $x$,
then the following holds:
$$
\#_{\Gamma}(M) \leq \sum_{i=0}^{r} \#_{\Gamma}(M_{i}).
$$
\end{enumerate}

\end{thm}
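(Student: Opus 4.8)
The plan is to show that every great antipodal set $A$ of $M$ can be "partitioned according to which polar a point lands in", so that the pieces become antipodal sets of the individual polars. Fix $x \in A$; by Lemma~\ref{lem:antipodal subset fixed point set}, in case (1) we have $A \subset F(\varphi_x(\gamma),M) = \bigcup_{i=0}^{r} M_i^\gamma$, and in case (2) we have $A \subset F(\varphi_x(\Gamma),M) = \bigcup_{i=0}^{r} M_i$. Since the $\gamma$-polars (resp.\ polars) are the connected components of the fixed point set, they are pairwise disjoint, so setting $A_i := A \cap M_i^\gamma$ (resp.\ $A_i := A \cap M_i$) gives a disjoint decomposition $A = \bigsqcup_{i=0}^{r} A_i$, whence $\#(A) = \sum_{i=0}^{r} \#(A_i)$.

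The key point is then that each $A_i$ is an antipodal set of the generalized $s$-manifold $M_i^\gamma$ (resp.\ $M_i$). First, by Proposition~\ref{prop:polar-subsp}, since $\Gamma$ is abelian and the action of $\varphi_x(\Gamma)$ is proper, each $\gamma$-polar $M_i^\gamma$ at $x$ (resp.\ each polar $M_i$ at $x$) is a subspace of $M$; hence by Proposition~\ref{prop:subspace} it carries the induced generalized $s$-structure $(M_i^\gamma, \Gamma, \{\widetilde\varphi_y\}_{y})$. Next, to check that $A_i$ is antipodal in $M_i^\gamma$: for $y, z \in A_i \subset A$ and any $\delta \in \Gamma$, since $A$ is antipodal in $M$ we have $\varphi_y(\delta)(z) = z$; as $y, z \in M_i^\gamma$ and $\widetilde\varphi_y(\delta) = \varphi_y(\delta)|_{M_i^\gamma}$, the same equality holds for $\widetilde\varphi_y(\delta)$, so $A_i$ is an antipodal set of $M_i^\gamma$. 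Therefore $\#(A_i) \le \#_\Gamma(M_i^\gamma)$ (resp.\ $\#(A_i) \le \#_\Gamma(M_i)$) by definition of the antipodal number.

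Combining these, $\#_\Gamma(M) = \#(A) = \sum_{i=0}^{r} \#(A_i) \le \sum_{i=0}^{r} \#_\Gamma(M_i^\gamma)$ in case (1), and likewise in case (2), which is the desired inequality. The first equality uses that $A$ is a \emph{great} antipodal set, so $\#_\Gamma(M) = \#(A)$.

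I expect the only genuine subtlety to be the bookkeeping of hypotheses: one must make sure that the properness assumption for $\varphi_x(\Gamma)$ at the chosen $x \in A$ is exactly what is needed to invoke Proposition~\ref{prop:polar-subsp} (which in turn rests on Lemma~\ref{lem:polar-submfd}), and that the polars/$\gamma$-polars in question are taken \emph{at the same point} $x$ that we fixed in $A$ — this is what makes $A \subset F(\varphi_x(\Gamma),M)$ hold and what lets us realize each piece inside a polar at $x$. The finiteness hypothesis ($r < \infty$) is used so that the sum on the right-hand side is a well-defined (possibly infinite) sum over finitely many terms; no other step requires it. Everything else is a routine disjoint-union and restriction argument.
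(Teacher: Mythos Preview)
Your proof is correct and follows essentially the same route as the paper's: invoke Lemma~\ref{lem:antipodal subset fixed point set} to place $A$ inside the fixed point set at $x$, use Proposition~\ref{prop:polar-subsp} (together with Proposition~\ref{prop:subspace}) to see that each ($\gamma$-)polar is a subspace and hence a generalized $s$-manifold, partition $A$ accordingly, and sum. Your explicit verification that each piece $A_i$ is antipodal in the corresponding polar is slightly more detailed than the paper's, which simply asserts it, but the argument is the same.
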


\begin{proof}
(1) 
Proposition~\ref{prop:polar-subsp} (1) shows that each $\gamma$-polar $M_{i}^{\gamma}$ is a subspace of $(M, \Gamma, \{\varphi_x\}_{x\in M})$.
Thus $(M_{i}^{\gamma}, \Gamma, \{\widetilde\varphi_x\}_{x \in M_{i}^{\gamma}})$ is a generalized $s$-manifold.
From Lemma~\ref{lem:antipodal subset fixed point set} (1), 
the great antipodal set $A$ of $M$ is a subset of $F(\varphi_{x}(\gamma), M)$.
Then, $A$ is decomposed into the disjoint union of $A_{i}^{\gamma}:= A\cap M_{i}^{\gamma}$,
i.e.
$$
A = \bigcup_{i=0}^{r} A_{i}^{\gamma}.
$$
Since $A_{i}^{\gamma}$ is an antipodal set of $M_{i}^{\gamma}$, we obtain 
$$
\#_{\Gamma}(M)
= \#(A)
= \sum_{i=0}^{r} \#(A_{i}^{\gamma})
\leq \sum_{i=0}^{r} \#_{\Gamma}(M_{i}^{\gamma}).
$$ 

The proof of (2) is similar to (1).
Proposition~\ref{prop:polar-subsp} (2) shows that each polar $M_{i}$ is a subspace of $(M, \Gamma, \{\varphi_x\}_{x\in M})$.
Thus $(M_{i}, \Gamma, \{\widetilde\varphi_x\}_{x \in M^{\gamma}})$ is a generalized $s$-manifold. 
From Lemma~\ref{lem:antipodal subset fixed point set} (2), 
the great antipodal set $A$ of $M$ is a subset of $F(\varphi_{x}(\Gamma), M)$.
Then, $A$ is decomposed into the disjoint union of $A_{i}:= A\cap M_{i}$,
i.e.
$$
A = \bigcup_{i=0}^{r} A_{i}.
$$
Since $A_{i}$ is an antipodal set of $M_{i}$, we obtain 
$$
\#_{\Gamma}(M)
= \#(A)
= \sum_{i=0}^{r} \#(A_{i})
\leq \sum_{i=0}^{r} \#_{\Gamma}(M_{i}).
$$ 
\end{proof}


\end{document}